\newtheorem{theorem}{Theorem}[section]
\newtheorem{lemma}[theorem]{Lemma}
\newtheorem{proposition}[theorem]{Proposition}
\newtheorem{corollary}[theorem]{Corollary}
\theoremstyle{remark}
\newtheorem{remark}[theorem]{Remark}
\newtheorem{example}[theorem]{Example}
\newcommand\fieldsetc{\mathbb}
\renewcommand{\H}{\ensuremath{\fieldsetc{H}}}
\newcommand{\C}{\ensuremath{\fieldsetc{C}}}
\newcommand{\R}{\ensuremath{\fieldsetc{R}}}
\newcommand{\Ca}{\ensuremath{\fieldsetc{O}}}
\newcommand{\g}[1]{\ensuremath{\mathfrak{#1}}}
\DeclareMathOperator{\id}{id}
\DeclareMathOperator{\Ad}{Ad}
\DeclareMathOperator{\Exp}{Exp}
\DeclareMathOperator{\spann}{span}
\DeclareMathOperator{\eS}{S}
\DeclareMathOperator{\HH}{H}
\newcommand\liegr{\sf}
\newcommand{\GL}[1]{\mbox{${\liegr GL}(#1)$}}
\newcommand{\SU}[1]{\mbox{${\liegr SU}(#1)$}}
\newcommand{\SUxU}[2]{\mbox{${\liegr S(U}(#1)\times{\liegr U}(#2))$}}
\newcommand{\U}[1]{\mbox{${\liegr U}(#1)$}}
\newcommand{\Sp}[1]{\mbox{${\liegr Sp}(#1)$}}
\newcommand{\SO}[1]{\mbox{${\liegr SO}(#1)$}}
\newcommand{\Spin}[1]{\mbox{${\liegr Spin}(#1)$}}
\newcommand{\G}{\mbox{$\liegr{G}_2$}}
\newcommand{\E}[1]{\mbox{$\liegr{E}_{#1}$}}
\newcommand{\Sph}[1]{\mbox{$\liegr{S}^{#1}$}}
\begin{document}
\title{On homogeneous manifolds whose isotropy actions are polar}

\author[J.\ C.\ D\'{\i}az-Ramos]{Jos\'{e} Carlos D\'{\i}az-Ramos}
\author[M.\ Dom\'{\i}nguez-V\'{a}zquez]{Miguel Dom\'{\i}nguez-V\'{a}zquez}
\author[A.\ Kollross]{Andreas Kollross}

\address{Department of Mathematics, University of Santiago de Compostela, Spain}
\address{Instituto de Ciencias Matem\'{a}ticas (CSIC-UAM-UC3M-UCM) Madrid, Spain}
\address{Institut f\"{u}r Geometrie und Topologie, Universit\"{a}t Stuttgart, Germany}

\email{josecarlos.diaz@usc.es}
\email{miguel.dominguez@icmat.es}
\email{kollross@mathematik.uni-stuttgart.de}

\begin{abstract}
We show that simply connected Riemannian homogeneous spaces of compact semisimple Lie groups with polar isotropy actions are symmetric, generalizing results of Fabio Podest\`{a} and the third named author. Without assuming compactness, we give a classification of Riemannian homogeneous spaces of semisimple Lie groups whose linear isotropy representations are polar. We show for various such spaces that they do not have polar isotropy actions. Moreover, we prove that Heisenberg groups and non-symmetric Damek-Ricci spaces have non-polar isotropy actions.
\end{abstract}


\subjclass[2010]{53C30, 53C35, 57S15, 57S20}

\keywords{Homogeneous space, isotropy action, isotropy representation, polar action, Riemannian symmetric space, generalized Heisenberg group, Damek-Ricci space}

\thanks{The first and second authors have been supported by projects MTM2016-75897-P (AEI/FEDER, UE) and ED431F 2017/03 (Xunta de Galicia, Spain). The second author has received funding from the ICMAT Severo Ochoa project SEV-2015-0554 (MINECO, Spain), and from the European Union's Horizon 2020 research and innovation programme under the Marie Sklodowska-Curie grant agreement No.~745722.}

\maketitle

%
%

\section{Introduction}
\label{sect:Intro}

Symmetric spaces are a central class of examples in Riemannian geometry. It has often been a fruitful approach to study to what extent certain properties of symmetric spaces continue to hold for more general classes of Riemannian manifolds. This point of view has, for example, led to the exploration of g.o.-spaces and the discovery of Damek-Ricci harmonic spaces.

In this article, we study the question whether another well-known property of Riemannian symmetric spaces, namely the polarity of their isotropy actions, holds for more general types of homogeneous manifolds. We say that an isometric Lie group action on a Riemannian manifold is \emph{polar} if there exists an embedded closed submanifold~$\Sigma$ which intersects all orbits of the group action and such that at each point~$p \in \Sigma$ the intersection of the orbit through~$p$ with the submanifold~$\Sigma$ is orthogonal. Such a submanifold~$\Sigma$ is called a \emph{section} of the polar action. A polar action is called \emph{hyperpolar} if it has a section which is flat in its induced Riemannian metric.

The isotropy actions of Riemannian symmetric spaces are well-known to be polar, in fact they are hyperpolar. Indeed, the maximal flats of the symmetric space, i.e.\ the maximal totally geodesic and flat subspaces, provide sections. More precisely, any maximal flat containing~$p$ is a section for the \emph{isotropy action} at~$p$, i.e.\ the action of~$H$ on~$M=G/H$ given by $h \cdot gH = hgH$, where $G$ is (the connected component of the identity of) the isometry group of~$M$  and $H=G_p$ its isotropy subgroup at~$p$. For example, a compact Lie group~$L$ with biinvariant metric is a Riemannian symmetric space, its isotropy action at the identity element~$e$ is given by the action of~$L$ on itself by conjugation, and the maximal tori are the sections for this action.

In order to determine if polar isotropy also occurs in other classes of Riemannian homogeneous spaces, it is a natural first step to consider their \emph{isotropy representation}. The isotropy representation of $H$ on $M=G/H$ is the linear action of~$H$ on~$T_pM$, where $h \cdot v = D_ph(v)$, i.e.\ the action given by the differentials of the isometries $h \in H$, which are linear endomorphisms of~$T_pM$. Since slice representations of polar actions are polar, cf.~\cite[Thm.~4.6]{pt1}, it follows that the linear isotropy representation of~$H$ on the tangent space~$T_pM$ is polar whenever the isotropy action of~$H$ on~$G/H$ is polar. However, the converse is not true, as was shown in~\cite{kp}; see Propositions~\ref{prop:hermnp} and~\ref{prop:quatex} below for more counterexamples.

Our main result is the following.

\begin{theorem}\label{th:main}
Let $G$ be a simply connected compact semisimple Lie group and let $H$ be a closed connected non-trivial proper subgroup.
Let the homogeneous manifold $M = G/H$ be endowed with a $G$-invariant Riemannian metric~$\mu$. Then the isotropy action of~$H$ on~$G/H$ is polar with respect to~$\mu$ if and only if the Riemannian manifold $(G/H,\mu)$ is a Riemannian symmetric space.
\end{theorem}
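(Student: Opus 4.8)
The plan is to prove the two implications separately, the direction ``symmetric $\Rightarrow$ polar'' being the routine one. If $(G/H,\mu)$ is Riemannian symmetric, then, as recalled in the introduction, its isotropy action is hyperpolar, any maximal flat through the base point $o=eH$ serving as a section; this needs no further argument. The substance of the theorem is the converse, so I would assume the isotropy action of $H$ on $(G/H,\mu)$ to be polar and aim to conclude that $(G/H,\mu)$ is symmetric.

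The first step is to linearize. Since slice representations of polar actions are polar \cite[Thm.~4.6]{pt1}, and since $o$ is a fixed point of the isotropy action, the slice representation at $o$ is exactly the linear isotropy representation of $H$ on $(\g{m},\mu_o)$, where $\g{g}=\g{h}\oplus\g{m}$ is the reductive decomposition and $\mu_o$ the inner product induced by $\mu$; hence this representation must be polar. I would exploit this strong constraint through the classification of homogeneous spaces of semisimple Lie groups with polar linear isotropy representation obtained in this paper. That classification yields a finite list of candidate pairs $(G,H)$ together with the admissible $\Ad(H)$-invariant inner products $\mu_o$ on $\g{m}$. It is essential to keep track of $\mu$ throughout: for reducible isotropy representations polarity may depend on the choice of invariant inner product, so the metric cannot be normalized away, and some candidate spaces will be symmetric only for special $\mu$.

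It then remains to separate the symmetric entries of this list from the rest and to show that every remaining candidate fails to have a polar isotropy \emph{action}, even though its linear isotropy representation is polar---precisely the phenomenon exhibited by the counterexamples of Propositions~\ref{prop:hermnp} and~\ref{prop:quatex}. The new difficulty compared with the simple case treated by Podest\`{a} and the third named author is that $G$ now splits as a product $G=G_1\times\cdots\times G_k$ of simple factors and $H$ may embed diagonally or project nontrivially onto several factors; I would organize the case analysis according to how $\g{h}$ sits inside $\g{g}$, reducing to the simple factors whenever $H$ respects the splitting and treating diagonal subgroups (and the mixed invariant metrics they permit) separately. For each non-symmetric candidate the strategy is to use that a section $\Sigma$ of a polar action is a totally geodesic submanifold whose tangent space at $o$ must coincide, up to conjugacy, with a section $\g{a}$ of the polar linear isotropy representation. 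One then derives a contradiction by showing, using the explicit Levi-Civita connection of the invariant metric $\mu$, either that $\g{a}$ does not integrate to a totally geodesic submanifold of $(G/H,\mu)$, or that the totally geodesic submanifold tangent to $\g{a}$ fails to meet some $H$-orbit orthogonally away from $o$.

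The main obstacle is this final elimination step. Passing from the polar linear isotropy representation to the nonlinear isotropy action requires second-order information: the representation-level section $\g{a}$ is forced upon us, but its failure to produce a genuine section is detected only by the curvature of $\mu$ and the extrinsic geometry of the orbits, which must be examined for each family of non-symmetric candidates---and, in the semisimple setting, for each way a diagonal $H$ can sit in a product $G$---rather than by a single uniform argument. Controlling the totally geodesic submanifolds of the non-symmetric homogeneous spaces appearing in the classification, and ruling out candidate sections among them, is where the real work lies.
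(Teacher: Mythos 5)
Your skeleton agrees with the paper's up to the point where the real work begins: the forward direction is the classical maximal-flat fact, and for the converse both you and the paper pass to the slice representation at the fixed point $eH$ (which is the linear isotropy representation, hence polar by \cite[Thm.~4.6]{pt1}), invoke the classification of pairs with polar linear isotropy (Theorems~\ref{th:kpmain}, \ref{th:kpth2} and Proposition~\ref{prop:kpss}), and defer the case of simple $G$ to Theorem~\ref{th:kpmain}. The gap is the elimination of the non-symmetric candidates with $G$ non-simple, which you outline but do not execute, and for which you propose a genuinely different and much harder route than the paper's. The paper never analyzes candidate totally geodesic sections or curvature: it stays at the representation level, computing slice representations at points $gH$ \emph{away} from the base point and showing they are non-polar. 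For Proposition~\ref{prop:kpss}~(ii) the (effectivized) slice representation at a suitable $gH$ is the diagonal $\SO4$-action on $\R^4\oplus\R^4$ (Proposition~\ref{prop:quatex}), and for case~(iii) a suitable slice representation contains a diagonal-type submodule which an inductive rank-of-center argument, based on Bergmann's criterion for reducible polar representations, shows is incompatible with indecomposability of $(G,H)$ (Proposition~\ref{prop:hermnp}). You cite these propositions only as illustrations of the phenomenon ``polar linear isotropy $\not\Rightarrow$ polar isotropy action'' and then propose to re-derive the needed non-polarity through the Levi-Civita connection of $\mu$ and the classification of totally geodesic submanifolds; since that computation would have to be carried out for every invariant metric on every member of the infinite family in Proposition~\ref{prop:kpss}~(iii), and you explicitly concede it is ``where the real work lies,'' your proposal is a program rather than a proof precisely at the theorem's core.

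Two further points. Your claim that ``for reducible isotropy representations polarity may depend on the choice of invariant inner product'' contradicts Lemma~\ref{lm:polinvsc}: for representations of compact groups, polarity is independent of the invariant inner product; only polarity of the isotropy \emph{action} can depend on $\mu$. This is not a pedantic correction, because it is exactly what makes the paper's elimination uniform in $\mu$: the slice representation at $gH$ is, as a module, $\g g/(\g h + \Ad(g)\g h)$ acted on by $H\cap gHg^{-1}$, independently of the metric, so a single non-polar slice representation rules out polarity of the action for \emph{all} invariant metrics at once, whereas your totally geodesic approach must confront the metric dependence head-on. Finally, your case analysis should record the two non-simple cases the paper disposes of directly: $(L\times L,\Delta L)$ is symmetric and isotropy irreducible, so every invariant metric is symmetric; and case~(ii) with $b=0$ gives $\eS^{4a+3}$, on which the simple factor $\Sp{a+1}$ already acts transitively, so every $G$-invariant metric is in particular $\Sp{a+1}$-invariant and this case reduces to Theorem~\ref{th:kpmain}.
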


This is a generalization of~\cite{kp}, where the statement of Theorem~\ref{th:main} was shown in the special case where $G$ is simple. Furthermore, all $G/H$ with $G$ semisimple and compact with polar linear isotropy representation were classified in~\cite{kp}.

Since actions of cohomogeneity one are polar, our main result can be viewed as a generalization of the well-known fact that two-point homogeneous spaces are rank-one symmetric spaces in the special case of homogeneous spaces of semisimple compact Lie groups.
Other similar results were proved in~\cite{hptt}, \cite{ks}, \cite{ks2} and~\cite{samiou}. In \cite{deng}, invariant Finsler metrics on homogeneous spaces with polar isotropy representations were studied.

We would like to point out here that, although the linear isotropy representation in the non-compact case is just obtained from duality of semisimple real Lie algebras, the isotropy action is much harder to study than in the compact case.  Our technique to show that an action is non-polar in the compact case is to exhibit a non-polar slice representation.  Nevertheless, we show that in the non-compact case there are isotropy actions that are infinitesimally polar (that is, all slice representations are polar) but non-polar.

Finally, there is an interesting class of homogeneous spaces whose isometry group is not semisimple and that has been thoroughly studied: Damek-Ricci spaces and their close relatives, 2-step nilpotent generalized Heisenberg groups.  Damek-Ricci spaces gained recognition because they provide counterexamples to the Lichnerowicz conjecture~\cite{DR}: Damek-Ricci spaces are harmonic (the volume density function is a radial function around each point), but generically non-symmetric.  We show in this paper that symmetric Damek-Ricci spaces are precisely the ones that have polar isotropy and that generalized Heisenberg groups have non-polar isotropy.  Therefore, within the class of Heisenberg groups and Damek-Ricci spaces, Theorem~\ref{th:main} remains valid.
\medskip

This paper is organized as follows. In \S\ref{sect:Prelim} we collect the basic results on the classification of homogeneous spaces with compact Lie groups of isometries whose linear isotropy representation is polar.  In \S\ref{sec:expl} we decide which of these homogeneous spaces have polar isotropy and prove Theorem~\ref{th:main}. Then, in \S\ref{sec:lin} we consider non-compact semisimple Lie groups. We prove a duality result for the linear isotropy representation and show that, unlike the compact case, there are spaces with non-polar, infinitesimally polar isotropy action. We also study some homogeneous spaces whose isometry group is not semisimple, namely, generalized Heisenberg groups and Damek-Ricci spaces. We determine in \S\ref{sect:Heis-DR} which of them have polar linear isotropy representation, and show that, among these spaces, the isotropy action is polar if and only if such a space is symmetric. We conclude the paper with some open problems in \S\ref{sect:open}.

%
%

\section{Preliminaries}
\label{sect:Prelim}

In~\cite{kp} the following classification of homogeneous spaces~$G/H$ of simple compact Lie groups~$G$ with polar isotropy actions was obtained.

\begin{theorem}\label{th:kpmain}
Let $G/H$ be a homogeneous space with $G$ a simple compact connected Lie group and $H$ a closed connected non-trivial subgroup. If the $H$-action on $G/H$ is polar with respect to a $G$-invariant Riemannian metric~$\mu$, then the Riemannian manifold $(G/H,\mu)$ is locally symmetric and the pair $(G,H)$ is, up to local isomorphism, either a symmetric pair or one of the following: $(\SU{n+1},\SU{n})$, $(\Sp{n+1},\Sp{n})$, $(\Sp{n+1},\Sp{n}\times\U1)$, $(\Spin9,\Spin7)$, $(\Spin7,\G)$, $(\G,\SU3)$.
\end{theorem}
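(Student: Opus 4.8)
The plan is to reduce the statement to a question about the linear isotropy representation and then to invoke the classification of polar representations. The base point $p = eH$ is a fixed point of the isotropy action, since $h \cdot eH = heH = eH$ for every $h \in H$; its slice representation is therefore the full action of $H$ on the tangent space $\m = T_p(G/H)$, namely $\Ad_H|_{\m}$. Because slice representations of polar actions are again polar by \cite[Thm.~4.6]{pt1}, polarity of the $H$-action on $G/H$ forces the representation $\Ad_H|_{\m}$ to be polar. Polarity of the action is thus strictly stronger than polarity of this representation, and the latter is amenable to a complete classification.

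Next I would apply Dadok's theorem, which asserts that every polar representation of a compact Lie group is orbit-equivalent to the isotropy representation of a Riemannian symmetric space (an $s$-representation). Since $G$ is simple and $H$ is a closed connected non-trivial subgroup, this reduces matters to the algebraic problem of listing the pairs $(\g g,\g h)$ for which $\Ad_H|_{\m}$ is orbit-equivalent to an $s$-representation. Comparing the cohomogeneity (which equals the rank of the associated symmetric space, i.e.\ the dimension of a section) and the dimension $\dim\m = \dim\g g - \dim\g h$ with the list of $s$-representations, and running through the admissible embeddings $H \hookrightarrow G$, yields a finite list of candidates: the symmetric pairs together with finitely many non-symmetric presentations.

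The principal obstacle is then to decide, for each non-symmetric candidate, whether the isotropy \emph{action} itself --- and not merely its isotropy representation --- is polar. Here I would once more use that every slice representation of a polar action must be polar: to discard a candidate it suffices to produce a point $q$ on a singular orbit for which the slice representation of the isotropy subgroup $H_q$ on the normal space to the orbit is \emph{non}-polar. Carrying out this analysis case by case, using the explicit geometry of each $G/H$ and of its $G$-invariant metrics, should eliminate every candidate except the symmetric pairs and the six families $(\SU{n+1},\SU{n})$, $(\Sp{n+1},\Sp{n})$, $(\Sp{n+1},\Sp{n}\times\U1)$, $(\Spin9,\Spin7)$, $(\Spin7,\G)$, $(\G,\SU3)$. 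I expect this to be the hardest and longest part, since it demands a concrete examination of singular orbits and normal slices for each space rather than a single appeal to a structure theorem.

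It remains to establish that $(G/H,\mu)$ is locally symmetric. This is immediate for the symmetric pairs. For each of the six exceptional families one checks that $G/H$ is a rank-one symmetric space in disguise --- an odd-dimensional round sphere or a complex projective space under a non-symmetric presentation --- and that the invariant metrics $\mu$ for which the isotropy action is polar are exactly those realizing this symmetric structure. This would simultaneously confirm polarity and the local symmetry of the resulting Riemannian manifold, completing the argument.
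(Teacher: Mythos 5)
First, on the comparison itself: the paper does not prove Theorem~\ref{th:kpmain} at all --- it is quoted from~\cite{kp}, and the only ``proof'' in the paper is that citation. So your proposal can only be measured against the strategy of~\cite{kp}, which the paper's own later arguments (slice representations, Lemma~\ref{lm:CohTwo}, Theorem~\ref{th:antonio}, Propositions~\ref{prop:hermnp} and~\ref{prop:quatex}) mirror. Your architecture is indeed that strategy: polarity of the action forces polarity of the slice representation at the fixed point~$eH$, i.e.\ of~$\chi(G,H)$; one classifies the pairs with polar linear isotropy; one then eliminates the spurious candidates by exhibiting non-polar slice representations at other points; and finally one proves local symmetry for the survivors. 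In that sense the plan is the right one.

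However, both load-bearing steps are asserted rather than executed, and as described one of them would not produce the claimed result. (a)~Dadok's theorem~\cite{dadok} only says a polar $\chi(G,H)$ is orbit equivalent to an s-representation; since $\chi(G,H)$ is typically \emph{reducible}, deciding when this happens requires the criteria of Eschenburg--Heintze~\cite{eh} or Bergmann~\cite{bergmann} plus a long case analysis over subgroup embeddings --- this is the bulk of~\cite{kp}, not a cohomogeneity-and-dimension count. Moreover the correct outcome of that classification is strictly larger than your candidate list: it also contains the pairs of Theorem~\ref{th:kpth2}, namely $(\SU{p+q},\SU p\times\SU q)$ with $2 \le p<q$, $(\Spin{2n},\SU n)$ with $n\ge5$ odd, and $(\E6,\Spin{10})$, which have polar linear isotropy but non-polar isotropy actions. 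Your elimination step is exactly what is needed to remove them (cf.\ \cite[Thm.~13]{kp}), but you supply no slice computation, and you would also need to observe that polarity of a slice representation is independent of the choice of invariant metric (Lemma~\ref{lm:polinvsc}), so that a single non-polar slice representation rules out \emph{all} $G$-invariant metrics at once. (b)~The local-symmetry step needs an actual argument, not the remark that the underlying manifolds are rank-one symmetric spaces in disguise; what must be shown is that \emph{every} invariant~$\mu$ for which the action is polar is locally symmetric. For the cohomogeneity-two pairs $(\SU{n+1},\SU n)$, $(\Sp{n+1},\Sp n\times\U1)$, $(\Spin9,\Spin7)$ this requires an argument like Lemma~\ref{lm:CohTwo}; for $(\Sp{n+1},\Sp n)$, whose isotropy action has cohomogeneity four, a different argument is needed (in the spirit of Theorem~\ref{th:antonio}); and for the cohomogeneity-one pairs $(\Spin7,\G)$ and $(\G,\SU3)$ the action is polar for every invariant metric, so symmetry of~$\mu$ follows from isotropy irreducibility (uniqueness of the invariant metric up to scaling), not from polarity --- your phrase ``the polar metrics are exactly the symmetric ones'' is the statement to be proved, and the diffeomorphism type of $G/H$ does not prove it. A small factual slip in the same place: $\G/\SU3 \cong \Sph{6}$ is an \emph{even}-dimensional sphere, so ``odd-dimensional round sphere or complex projective space'' does not cover all six families.
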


Let $G$ be a Lie group with Lie algebra~$\g g$ and $H \subseteq G$ a compact subgroup. We denote by $\Ad_G|_H$ the restriction of the adjoint representation of~$G$ to the subgroup~$H$. By \[\chi(G,H)= \Ad_G|_H \ominus \Ad_H\] we denote the isotropy representation of the homogeneous space~$G/H$, given by the linear action of~$H$ on the tangent space~$T_{eH}G/H$.

\begin{lemma}\label{lm:polinvsc}
Let $\rho \colon G \to \GL{V}$ be a representation of the compact Lie group~$G$ on the finite-dimensional real vector space~$V$. Let $\mu, \nu \colon V \times V \to \R$ be $G$-invariant scalar products. Then the action of~$G$ on~$V$ is polar with respect to~$\mu$ if and only if it is polar with respect to~$\nu$.
\end{lemma}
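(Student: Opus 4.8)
The plan is to reduce the statement to the evident fact that polarity is preserved under $G$-equivariant isometries, by producing such an isometry between $(V,\mu)$ and $(V,\nu)$. First I would encode the relation between the two scalar products by the unique operator $A \in \GL{V}$ determined by $\nu(x,y)=\mu(Ax,y)$ for all $x,y\in V$. Since $\nu$ is symmetric and positive definite, $A$ is self-adjoint and positive definite with respect to~$\mu$.

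The key observation is that $A$ commutes with the $G$-action. Indeed, the $G$-invariance of $\mu$ means that each $\rho(g)$ is a $\mu$-isometry, and combining this with the $G$-invariance of~$\nu$ gives, for all $x,y\in V$,
\[
\mu(\rho(g)^{-1}A\rho(g)x,y)=\mu(A\rho(g)x,\rho(g)y)=\nu(\rho(g)x,\rho(g)y)=\nu(x,y)=\mu(Ax,y).
\]
As this holds for all $x,y$, we obtain $\rho(g)^{-1}A\rho(g)=A$, i.e.\ $A$ is $G$-equivariant.

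Now let $B=A^{1/2}$ be the positive definite, $\mu$-self-adjoint square root of~$A$. Because each $\rho(g)$ commutes with~$A$, it preserves every eigenspace of~$A$ and therefore commutes with~$B$ as well (equivalently, $B$ is a limit of polynomials in~$A$); hence $B$ is also $G$-equivariant. Moreover $\mu(Bx,By)=\mu(B^2x,y)=\mu(Ax,y)=\nu(x,y)$, so $B\colon (V,\nu)\to(V,\mu)$ is a linear isometry intertwining the two copies of the $G$-action.

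Finally I would transport sections through~$B$. If $\Sigma$ is a section for the $G$-action on $(V,\nu)$, then $B(\Sigma)$ is again a closed embedded submanifold; since $B$ is $G$-equivariant it maps $G$-orbits to $G$-orbits, so $B(\Sigma)$ meets every orbit, and since $B$ is an isometry it carries the $\nu$-orthogonality of $\Sigma$ to the orbits into $\mu$-orthogonality of $B(\Sigma)$ to the orbits. Thus $B(\Sigma)$ is a section for the action on $(V,\mu)$, and symmetrically for $B^{-1}$, which proves the equivalence. I do not expect a genuine obstacle here: the only point requiring care is the $G$-equivariance of the square root~$B$, and the rest follows directly from the definitions. (Compactness of~$G$ enters only to guarantee the existence of invariant scalar products and that each $\rho(g)$ is orthogonal for them, the latter in fact already being a consequence of invariance.)
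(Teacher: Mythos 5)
Your proof is correct, but it takes a genuinely different route from the paper's. You construct a $G$-equivariant linear isometry $B=A^{1/2}\colon (V,\nu)\to (V,\mu)$, where $A$ is the transfer operator defined by $\nu(x,y)=\mu(Ax,y)$, and transport sections through it; all the steps you flag (self-adjointness and positivity of $A$, equivariance of $A$ and hence of its square root, and the fact that an equivariant isometry carries sections to sections) are sound, and indeed $B$ is even an exact polynomial in $A$ in finite dimensions, so no limiting argument is needed. The paper argues instead via the decomposition $V=V_0\oplus V_1\oplus\dots\oplus V_k$ into trivial and irreducible summands: given a section $\Sigma$ for $\mu$, Dadok's theorem forces $\Sigma=V_0\oplus\Sigma_1\oplus\dots\oplus\Sigma_k$ with $\Sigma_i\subset V_i$; polarity forces the nontrivial summands $V_1,\dots,V_k$ to be mutually inequivalent (a polar representation cannot contain two equivalent nontrivial irreducible summands); and Schur's lemma then gives that the $V_i$ are orthogonal for both products with $\nu|_{V_i\times V_i}=\lambda_i\,\mu|_{V_i\times V_i}$, so $\mu$- and $\nu$-orthogonality to $\Sigma$ agree and the \emph{same} $\Sigma$ is a section for $\nu$. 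Your argument is more elementary and more general: it needs no structural facts about polar representations (no Dadok, no inequivalence of summands), it proves the stronger statement that $(V,\mu)$ and $(V,\nu)$ are isomorphic as orthogonal $G$-representations, and compactness of $G$ plays no role beyond the hypothesis that both scalar products are invariant. What the paper's approach buys in return is the finer conclusion that the identical subspace $\Sigma$ serves as a section for both metrics and that $\nu$ differs from $\mu$ only by positive scaling factors on the irreducible pieces; but for the statement of the lemma itself, your equivariant-isometry argument is fully sufficient.
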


\begin{proof}
Since representations of compact Lie groups are completely reducible, there is a decomposition $V = V_0 \oplus V_1 \oplus \dots \oplus V_k$ such that $G$ acts trivially on~$V_0$ and irreducibly on~$V_1, \dots, V_k$. Let $\Sigma \subseteq V$ be a section of the $G$-action on the Euclidean vector space~$(V,\mu)$. Then it follows from~\cite[Thm.~4]{dadok} that $\Sigma$ is of the form $\Sigma = V_0 \oplus \Sigma_1 \oplus \dots \oplus \Sigma_k$, where $\Sigma_i \subset V_i$ are linear subspaces. By~\cite[Lemma~2.9]{kollross} we have that the invariant subspaces $V_1, \dots , V_k$ are mutually inequivalent
and it is a consequence of Schur's Lemma that any $G$-invariant scalar product is such that the~$V_0, \dots, V_k$ are mutually orthogonal and $\nu|_{V_i \times V_i} = \lambda_i \, \mu|_{V_i \times V_i}$ with a positive constant~$\lambda_i$ for $i=1,\dots,k$.
It follows that a vector in~$V$ is orthogonal to~$\Sigma$ with respect to~$\mu$ if and only if it is orthogonal to~$\Sigma$ with respect to~$\nu$.
\end{proof}

In particular, the polarity of the linear representation~$\chi(G,H)$ does not depend on the choice of the $G$-invariant Riemannian metric on~$G/H$. (However, the polarity of the $H$-action on~$G/H$ may depend on the choice of the $G$-invariant Riemannian metric on~$G/H$, cf.~\cite[Thms.~14,\,15]{kp}.)

For homogeneous spaces $G/H$ with compact simple~$G$ the following classification of spaces with polar linear isotropy was given in~\cite{kp}.

\begin{theorem}\label{th:kpth2}
Let $G$ be a simply connected simple compact Lie group and $H$ be a closed connected subgroup. Assume the linear isotropy representation $\chi(G,H)$ is polar. Then $(G,H)$ is either one of the pairs given in Theorem~\ref{th:kpmain} or one of the following:
   \begin{enumerate}
     \item  $(\SU{p+q},\SU p\times\SU q)$, where~$2 \le p<q$;
     \item  $(\Spin{2n},\SU n)$, where $n \ge 5$ is odd;
     \item   $(\E6,\Spin{10})$.
   \end{enumerate}
\end{theorem}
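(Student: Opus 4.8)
The plan is to combine the metric-independence established in Lemma~\ref{lm:polinvsc} with Dadok's classification of polar representations of compact Lie groups. First I would fix the $G$-invariant metric induced by the negative of the Killing form, so that $\g g = \g h \oplus \g m$ is an orthogonal reductive decomposition and $\chi(G,H)$ is realised as the $\Ad(H)$-action on~$\g m$. The fundamental tool is Dadok's theorem that every polar representation of a compact group is orbit-equivalent to the isotropy representation of a Riemannian symmetric space (an \emph{s-representation}), cf.~\cite{dadok}. I would extract the two consequences that drive the whole argument: each irreducible $H$-submodule of a polar $\chi(G,H)$ is itself polar, hence an s-representation up to orbit equivalence (this is the summand-wise part of the section decomposition $\Sigma = V_0 \oplus \Sigma_1 \oplus \dots \oplus \Sigma_k$ already used in the proof of Lemma~\ref{lm:polinvsc}); and the ring of $H$-invariants on~$\g m$ must be a polynomial ring, with cohomogeneity and generalised Weyl group constrained to match those of an s-representation.

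The existence half — that the listed pairs do have polar isotropy representation — I would treat first, as the more transparent part. For symmetric pairs $\chi(G,H)$ is by definition an s-representation, hence polar. The six non-symmetric pairs of Theorem~\ref{th:kpmain} realise homogeneous spheres and closely related spaces; their isotropy representations have low cohomogeneity and polarity is verified directly by writing down a section. The three families (i)--(iii) I would explain uniformly: in each of them $H$ is the semisimple part of the isotropy group $K = H\cdot\U1$ of a Hermitian symmetric space $G/K$, namely $\SU{p+q}/\SUxU{p}{q}$, $\SO{2n}/\U n$, and $\E6/(\Spin{10}\cdot\U1)$. Passing from $K$ to $H$ enlarges $\g m$ only by the trivial summand $\g u(1)$, so that $\chi(G,H) = \chi(G,K) \oplus \R$ with $\R$ trivial; since $\chi(G,K)$ is an s-representation and adjoining a trivial summand preserves polarity — the section merely acquires the fixed subspace $V_0$, exactly as in Lemma~\ref{lm:polinvsc} — each such $\chi(G,H)$ is polar.

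The completeness half is where the work lies, and I would argue by elimination. Running through the simple compact groups~$G$ type by type, and through the closed connected subgroups~$H$ (organised via the classification of maximal connected subgroups and, within them, of reductive subgroups — noting that $H$ itself need not be maximal, as families (i)--(iii) already show), I would compute the $H$-module $\g m = \g g \ominus \g h$ and test it against the necessary conditions above. A candidate is discarded as soon as one of the following fails: some irreducible summand of~$\g m$ is not orbit-equivalent to an irreducible s-representation of~$H$; or a slice representation of some isotropy group $H_v$ on the normal space of a well-chosen $v\in\g m$ is non-polar (recall slice representations of polar representations are polar, cf.~\cite{pt1}); or the invariant-theoretic data — degrees of the basic invariants, structure of the principal isotropy subgroup — are incompatible with a polynomial invariant ring. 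The reduction to irreducible summands makes this enumeration finite and, for the bulk of the cases, mechanical.

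The main obstacle is twofold. The first difficulty is the sheer size and care of the case-by-case elimination, in particular exhibiting, for every surviving non-symmetric candidate, an explicit non-polar slice or a concrete invariant-theoretic obstruction. The second, more delicate, difficulty is pinning down the precise parameter ranges $2 \le p < q$ and $n \ge 5$ odd: one must decide exactly when a candidate has already been counted as a symmetric pair or appears in Theorem~\ref{th:kpmain}, and when equal-factor cases ($p=q$), small ranks, or the parity of~$n$ produce low-dimensional coincidences that either collapse the pair onto a previously listed one or, conversely, destroy polarity. Settling these boundary cases requires the explicit identification of the subgroups and their isotropy modules rather than the general structural argument, and this is the part I expect to be most error-prone.
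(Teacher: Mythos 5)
This theorem is not proved in the paper at all: it is quoted verbatim from~\cite{kp}, so the only ``paper proof'' is a citation, and the question is whether your sketch would actually establish the statement. Your overall strategy (Dadok's theorem plus a case-by-case elimination over subgroups of the simple compact groups) is indeed the strategy of the cited work, but the sketch contains a genuine error at its one substantive step, the existence half.

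You write $\chi(G,H)=\chi(G,K)\oplus\R$ for $K=H\cdot\U1$ and conclude polarity because ``adjoining a trivial summand preserves polarity.'' But the group acting in $\chi(G,H)$ is $H$, not $K$: correctly, $\chi(G,H)\cong\chi(G,K)|_H\oplus\R$, and the issue is whether the \emph{restriction} of the s-representation $\chi(G,K)$ to the subgroup $H\subset K$ is still polar. Restricting a polar representation to a subgroup does not preserve polarity in general; it does so here precisely when the $\U1$-factor is \emph{inessential}, i.e.\ when the orbits of $\chi(G,K)|_H$ and of $\chi(G,K)$ have the same connected components --- this is the criterion of Eschenburg--Heintze~\cite{eh} (see also \cite[Thm.~6]{kp} and Remark~\ref{rm:essential} of the paper), and it is exactly what your argument omits. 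As written, your uniform argument would equally ``prove'' that $(\SU{2p},\SU{p}\times\SU{p})$ and $(\Spin{2n},\SU{n})$ with $n$ even have polar linear isotropy; these are the tube-type (essential $\U1$) cases, where polarity in fact fails --- indeed the paper itself uses the non-polarity of $\SU{p}\times\SU{p}$ on $\C^p\otimes\C^p$ and of $\SU{n-1}$ on $\Lambda^2\C^{n-1}$ ($n-1$ even) as the obstruction in the proof of Proposition~\ref{prop:hermnp}. Consequently the parameter restrictions $2\le p<q$, $n\ge5$ odd, and the appearance of $\E6$ but not of the other Hermitian families are not the boundary bookkeeping your last paragraph describes; they are the substance of the theorem, and both your existence half and your elimination scheme are incomplete without the essential/inessential dichotomy (equivalently, the tube-type criterion) as the organizing lemma.
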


Let $G$ be a simple Lie group and let $H \subseteq G$ be a compact connected subgroup such that $(G,K)$ is a Hermitian symmetric pair, where $K = H \cdot \U1$. Let $\rho = \chi(G,K)$ be the isotropy representation of~$G/K$ and let $\rho|_H$ be its restriction to the subgroup~$H$. If the connected components of the orbits of $\rho$ and~$\rho|_H$ are the same, then we say that the $\U1$-factor is \emph{inessential} for the symmetric pair~$(G,K)$, otherwise we say it is \emph{essential}. It follows from \cite{eh} or~\cite[Thm.~6]{kp} that $\rho|_H$ is polar if and only if the $\U1$-factor is inessential.

\begin{remark}\label{rm:essential}
The pairs $(G,H)$ given in Theorem~\ref{th:kpth2}, but not in Theorem~\ref{th:kpmain}, are exactly those pairs where $G$ is a simply connected simple compact Lie group and $H \subseteq G$ a compact connected subgroup such that $(G,H \cdot \U1)$ is a Hermitian symmetric pair of rank~$\ge 2$ where the $\U1$-factor is \emph{inessential}.
(We remark parenthetically that these are also exactly the pairs $(G,H)$ with $G$ simply connected, simple and compact where $(G,H \cdot \U1)$ is a symmetric pair such that the non-compact dual of~$G/(H \cdot \U1)$ is an irreducible Hermitian symmetric space of \emph{non-tube type} and rank~$\ge 2$.)
\end{remark}

A pair of groups $(G,H)$ with $H \subseteq G$ is said to be \emph{decomposable} if $G \cong G_1 \times G_2$ as a direct product of non-trivial factors and $H = H_1 \times H_2$ with $H_i \subseteq G_i$ for $i=1,2$. Otherwise, we say the pair is \emph{indecomposable}.
For $G$ semisimple and compact, the homogeneous spaces~$G/H$ with polar linear isotropy representation were classified in~\cite{kp}. Obviously, it suffices to classify the spaces where the pair~$(G,H)$ is indecomposable. The result can be stated as follows, where we denote the canonical projections by~$\pi_i \colon G_1 \times \dots \times G_m \to G_i$.

\begin{proposition}\label{prop:kpss}
Let $G = G_1 \times \dots \times G_m$ be a simply connected compact Lie group, where the $G_i$ are simple normal factors, and let $H$ be a compact connected subgroup of~$G$ such that the pair~$(G,H)$ is indecomposable. Assume $m \ge 2$.
Then the linear isotropy representation $\chi(G,H)$ of the homogeneous space~$G/H$ is polar if and only if the pair~$(G,H)$ is one of the following (up to automorphisms of~$G$):

\begin{enumerate}
\item $(L \times L, \Delta L)$, where $L$ is a
simple compact Lie group and $\Delta L = \{ (g,g) \mid g \in L \}$;

\item
$(\Sp{a+1} \times \Sp{b+1}, \Sp a \times \Sp1 \times \Sp b)$, where $a \ge 1, b \ge 0$;

\item
$(G,H)$, where each $(G_i, \pi_i(H))$ is either an irreducible Hermitian symmetric pair or $(\Sp  {n+1}, \Sp{n}\cdot\U1)$, the intersection $G_i \cap H$ agrees with the semisimple part of~$\pi_i(H)$ for each~$i$, and the rank of the center of~$H$ is greater or equal than the number of those indices~$i$ where the $\U1$-factor is essential for $(G_i, \pi_i(H))$.
\end{enumerate}
\end{proposition}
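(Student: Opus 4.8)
The plan is to reduce the semisimple case to the simple case treated in Theorems~\ref{th:kpmain} and~\ref{th:kpth2} by splitting~$\chi(G,H)$ into contributions of the individual factors~$G_i$ together with a contribution that measures how these factors are glued inside~$H$. Writing $\g g = \g g_1 \oplus \dots \oplus \g g_m$ and $\bar H_i = \pi_i(H)$ with Lie algebra~$\bar{\g h}_i$, I observe that the factors~$\g g_j$, $j \ne i$, centralize~$\g g_i$, so~$H$ acts on~$\g g_i$ only through~$\bar H_i$ by~$\Ad_{G_i}|_{\bar H_i}$. Splitting off the orbit-tangent part, $\g g_i \cong \chi(G_i,\bar H_i) \oplus \bar{\g h}_i$ as $\bar H_i$-modules, and using the embedding $\g h \hookrightarrow \bar{\g h}_1 \oplus \dots \oplus \bar{\g h}_m$, $X \mapsto (\pi_1(X),\dots,\pi_m(X))$, I would establish the $H$-module isomorphism
\[
\chi(G,H) \;\cong\; \bigoplus_{i=1}^m \chi(G_i,\bar H_i) \;\oplus\; \chi\bigl(\bar H_1 \times \dots \times \bar H_m,\, H\bigr),
\]
in which the last summand is the isotropy representation of~$H$ inside the product of its own projections.

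I would first treat the simple summands. Since~$H$ surjects onto each~$\bar H_i$, the $H$- and $\bar H_i$-actions on~$\chi(G_i,\bar H_i)$ have the same orbits, so this summand is polar as an $H$-module if and only if the linear isotropy representation of~$G_i/\bar H_i$ is polar. By the decomposition of sections for polar representations (Dadok, \cite[Thm.~4]{dadok}, as in Lemma~\ref{lm:polinvsc}), the restriction of a polar representation to a sum of isotypical components is again polar; hence, up to grouping equivalent components together, each~$\chi(G_i,\bar H_i)$ is polar, and Theorem~\ref{th:kpth2} applied to the simply connected simple compact factor~$G_i$ and its closed connected subgroup~$\bar H_i$ forces~$(G_i,\bar H_i)$ to be trivial, symmetric, or one of the exceptional pairs listed there. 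A Goursat-type analysis of the last summand then describes the admissible gluings: the semisimple part of~$H$ can only identify \emph{isomorphic} simple factors diagonally, yielding cases~(i) and~(ii), whereas identifications among the circle factors of the isotropy groups of Hermitian pairs yield case~(iii); throughout one uses that $G_i \cap H$ must equal the semisimple part of~$\bar H_i$.

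The main obstacle is that polarity of the summands does \emph{not} imply polarity of the full sum, so the heart of the proof is to decide exactly when the combined $H$-action is polar. In case~(iii) the last summand is a \emph{trivial} representation of dimension $m - \operatorname{rank} Z(H)$, hence irrelevant to polarity, and the question reduces to whether $\bigoplus_i \chi(G_i,\bar H_i)$ is polar, where each summand is the isotropy representation of a Hermitian symmetric pair (including that of $\Sp{n+1}/(\Sp n \cdot \U1) = \C P^{2n+1}$) and the central torus $Z(H) \subseteq \prod_i Z(\bar H_i)$ acts by the corresponding circle rotations. Invoking the criterion that such a circle may be dropped without destroying polarity precisely when it is \emph{inessential} (Remark~\ref{rm:essential} and~\cite{eh}), I expect to show that polarity of the sum forces~$Z(H)$ to supply one independent circle for each \emph{essential} factor while demanding none for the inessential ones; this is exactly the stated requirement that the rank of~$Z(H)$ be at least the number of indices~$i$ for which the~$\U1$-factor is essential for~$(G_i,\bar H_i)$. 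The non-abelian analogue in case~(ii), a diagonal~$\Sp1$ acting simultaneously on~$\H^a$, on~$\H^b$, and on its own adjoint module~$\g{sp}(1)$, must be verified to be polar directly, and the remaining conceivable gluings---notably three or more factors sharing one common diagonal subgroup---must be ruled out by exhibiting a non-polar slice representation, as in the simple case. Conversely, for each of the three families polarity follows by assembling the sections of the constituent symmetric-space isotropy representations into a global section, the center-rank condition being exactly what guarantees that the circle actions are independent enough for these to fit together orthogonally.
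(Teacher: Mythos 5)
First, a point of reference: the paper never proves Proposition~\ref{prop:kpss} --- it is quoted from~\cite{kp} (``For $G$ semisimple and compact, the homogeneous spaces \dots\ were classified in~\cite{kp}''), so there is no in-paper proof to compare against, and your argument has to stand on its own as a complete proof. Judged that way, your opening moves are correct and are certainly how any proof must begin: the $H$-module splitting
\[
\chi(G,H)\;\cong\;\bigoplus_{i=1}^m\chi(G_i,\bar H_i)\;\oplus\;\chi\bigl(\bar H_1\times\dots\times\bar H_m,\,H\bigr)
\]
is valid, restrictions of polar representations to invariant subspaces are again polar, the $H$-orbits and the $\bar H_i$-orbits on $\chi(G_i,\bar H_i)$ coincide because $H\to\bar H_i$ is onto, so Theorems~\ref{th:kpmain} and~\ref{th:kpth2} confine each $(G_i,\bar H_i)$ to the known lists; and your observation that in case~(iii) the gluing summand is a trivial module of dimension $m-\operatorname{rank}Z(H)$ is also right.

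After that, however, the two steps carrying the actual content of the classification are asserted rather than proved, and one of them is asserted in a form that is false. (a) The ``Goursat-type analysis'': the no-repeated-submodule lemma \cite[Lemma~2.9]{kollross} rules out a simple factor of $H$ projecting nontrivially into three or more $G_i$'s, and the gluing of two identical pairs along their full isotropy groups, but it says nothing about, e.g., a diagonal $\Sp1$ joining $(\Sp{a+1},\Sp a\times\Sp1)$ to the $\Sp1$-factor of $(\SO{n+4},\SO{n}\times\SO4)$ or to $\SU2\subset\SUxU{n}{2}\subset\SU{n+2}$, nor about $\Delta\SU3\subset\SU3\times\SU4$ (which gives $\ad(\SU3)\oplus\C^3\oplus\R$): in all these candidates the summands are pairwise inequivalent and individually polar, so each must be eliminated by hand, typically via \cite[Prop.~1~(iii)]{bergmann} (the isotropy group of a regular vector in one summand fails to have the right orbits on another). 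This case-by-case elimination is the bulk of the work behind the statement, and your proposal does not engage with it. (b) Case~(iii): the phrase ``I expect to show'' flags the gap, and moreover your identification of ``one independent circle for each essential factor'' with the rank inequality is not correct. Take $G=\SU2\times\SU2\times\SU3\times\SU3$ and let $H\cong\SU2\times\SU2\times T^2$ have semisimple part the two $\SU2$'s inside the $\SU3$-factors and center the torus $\{(z,z,zw,w)\}$ inside the product of the four circles $Z(\pi_i(H))$. Then every condition of case~(iii) holds as literally stated: each $(G_i,\pi_i(H))$ is an irreducible Hermitian symmetric pair ($\C P^1$ twice, $\C P^2$ twice), $G_i\cap H$ equals the semisimple part of $\pi_i(H)$, the pair is indecomposable, and $\operatorname{rank}Z(H)=2$ equals the number of essential indices (the two $\C P^1$-factors). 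Yet the image of $H$ acting on the invariant subspace $\chi(G_1,\pi_1(H))\oplus\chi(G_2,\pi_2(H))\cong\C\oplus\C$ is the diagonal $\U1$, whose action on $\C^2$ is non-polar, so $\chi(G,H)$ is non-polar. What polarity actually requires is that $Z(H)$ \emph{surject} onto the product of the circles of the essential factors; by indecomposability this coincides with the rank count when there is at most one essential index or $m\le 3$, but not in general, which is why the discrepancy is invisible in small examples. Your sketch, treating the two conditions as identical, would thus ``prove'' polarity of a non-polar representation. Isolating the correct criterion and establishing it requires the iterative regular-isotropy argument via \cite[Prop.~1~(iii)]{bergmann} --- exactly the argument this paper runs at the slice-representation level in the proof of Proposition~\ref{prop:hermnp} --- and cannot be waved through.
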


We will now state two results which show that homogeneous spaces with polar isotropy actions are locally symmetric under certain additional assumptions.  Note that neither result requires the transitive action of a semisimple Lie group.

\begin{lemma}\label{lm:CohTwo}
Let $M$ be a homogeneous Riemannian manifold on which a Lie group~$H$ of isometries acts polarly with a fixed point~$p$ and with cohomogeneity two. Then $M$ is locally symmetric.
\end{lemma}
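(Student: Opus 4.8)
The plan is to reduce everything to the vanishing of $(\nabla R)_p$ at the fixed point~$p$ and then let homogeneity do the rest. Indeed, $\nabla R$ is invariant under every isometry of~$M$, and a homogeneous Riemannian manifold admits a transitive group of isometries; hence $(\nabla R)_p = 0$ at a single point already forces $\nabla R \equiv 0$, which is Cartan's criterion for $M$ to be locally symmetric. Since $H$ fixes~$p$, it acts on $T_pM$ by the isotropy representation and preserves both $R_p$ and $(\nabla R)_p$, so the whole analysis can be carried out on the single tangent space $T_pM$.

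First I would describe the local model at~$p$. Because $p$ is a fixed point, the slice representation of the $H$-action at~$p$ is the full linear isotropy representation on $T_pM$, and by \cite[Thm.~4.6]{pt1} it is polar; its cohomogeneity is two. Choosing a section $\Sigma_0 \subset T_pM$, the surface $\Sigma = \exp_p(\Sigma_0)$ is two-dimensional and totally geodesic, since sections of polar actions are totally geodesic and $\exp_p$ conjugates the $H$-action near~$p$ to its linearization. The generalized Weyl group~$W$ acts on the plane $\Sigma_0$ as a finite dihedral group $D_m$, each of whose reflections is induced by an isometry in~$H$ fixing~$p$; equivalently, $H$ acts on the unit sphere of $T_pM$ with cohomogeneity one.

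For the tangential part of the curvature derivative I would argue as follows. As $\Sigma$ is totally geodesic, the Gauss equation identifies the sectional curvature of~$M$ along $T\Sigma$ with the Gauss curvature~$K$ of~$\Sigma$, and the components of $(\nabla R)_p$ with all arguments tangent to~$\Sigma$ coincide with those of the intrinsic $(\nabla^\Sigma R^\Sigma)_p$; for a surface the latter vanish precisely when $dK_p = 0$. Now $K$ is invariant under~$W$, and the dihedral action on $T_p\Sigma$ contains a nontrivial rotation (by $2\pi/m$ with $m \ge 2$); such a rotation fixes no nonzero covector, so $dK_p = 0$ and the purely tangential part of $(\nabla R)_p$ vanishes in every case.

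The remaining, and hardest, task is to kill the components of $(\nabla R)_p$ that mix directions tangent and normal to~$\Sigma$. Invariance of $(\nabla R)_p$ under the reflections in~$H$ eliminates every component with an odd number of arguments in the $(-1)$-eigenspace of some reflection, and the second Bianchi identity relates the survivors; but to exploit these relations one must understand how the reflections, and the whole isotropy group, act on the normal directions, which is exactly the detailed structure of the polar representation. The genuine subtlety is that invariance under~$H$ alone cannot suffice, since a non-symmetric homogeneous space may share the same isotropy group: the totally geodesic character of the section, the truly metric consequence of polarity, must enter. This is most transparent when $D_m$ contains no element acting as $-\id$ on $\Sigma_0$, e.g.\ when $W \cong D_3$ in the $A_2$ situation modelled on \SU3/\SO3; then no element of~$H$ induces $-\id$ on $T_pM$, the geodesic symmetry~$s_p$ is not realized inside~$H$, and one must show it is nevertheless an isometry by controlling these mixed components directly. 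I expect this last point to be the main obstacle.
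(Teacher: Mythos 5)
Your proposal has a genuine gap: it is a programme rather than a proof. The reduction of local symmetry to $(\nabla R)_p=0$ at the fixed point is sound (isometries preserve $\nabla R$ and the isometry group of a homogeneous space is transitive), and so is the identification of the slice representation at~$p$ with the full isotropy representation. But the decisive step --- the vanishing of those components of $(\nabla R)_p$ with arguments both tangent and normal to the section --- is exactly what you leave open, and you say so yourself (``I expect this last point to be the main obstacle''). This is not a technical remainder; it is where the entire content of the lemma sits. As you correctly suspect, $H$-invariance of $(\nabla R)_p$ together with the Bianchi identities cannot suffice: the spaces of Theorem~\ref{th:noncompact-nonpolar} ($\SU{n,1}/\SU{n}$, $\Sp{n,1}/(\Sp{n}\times\U1)$, $\Spin{8,1}/\Spin7$, with any invariant metric) are homogeneous, have polar cohomogeneity-two isotropy \emph{representations}, and are never locally symmetric, so every purely pointwise representation-theoretic constraint you list is satisfied there while $(\nabla R)_p\neq 0$. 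The only ingredient distinguishing the polar case is the total geodesy of the section $\Sigma$ in~$M$, and your proposal never actually deploys it in the mixed-component analysis.

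There is also an independent flaw in the tangential step: you assume the Weyl group acts on $T_p\Sigma$ as a dihedral group $D_m$ with $m\ge 2$, hence contains a rotation fixing no nonzero covector. This fails whenever the slice representation at~$p$ has a one-dimensional trivial summand, e.g.\ for $\SU{n}$ acting on $\C^n\oplus\R$ (the isotropy representation of $\SU{n+1}/\SU{n}$ and of $\SU{n,1}/\SU{n}$, precisely the cases this lemma is applied to in the paper): there the Weyl group is $\mathbb{Z}_2$, generated by a single reflection, which fixes a line of covectors, so $dK_p=0$ does not follow from invariance. For comparison, the paper's proof takes a completely different and much shorter route: since $p\in\Sigma$ and the slice representation at~$p$ is polar with section $T_p\Sigma$ \cite[Thm.~4.6]{pt1}, any geodesic of~$M$ can first be translated by homogeneity so as to pass through~$p$, and then mapped by some $h\in H$ so that its initial vector lies in $T_p\Sigma$; since $\Sigma$ is totally geodesic \cite[Thm.~3.2]{pt}, the whole geodesic then lies in~$\Sigma$. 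Local symmetry is at that point exactly the conclusion of \cite[Thm.~2.3]{ks}. In other words, the curvature analysis you attempt at~$p$ is outsourced there to the Kollross--Samiou theorem; completing your approach would essentially amount to reproving that result, and the steps you have written down do not do it.
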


\begin{proof}
We show that there exists a two-dimensional totally geodesic subspace~$\Sigma$ with the property that any geodesic of $M$ can be mapped into~$\Sigma$ by an isometry.

Let $\Sigma$ be a section of the polar $H$-action. Then $p \in \Sigma$, since a section meets all orbits and $p$ is a fixed point. Let $\gamma \colon \R \to M$ be a geodesic. Since $M$ is homogeneous, we may assume that $\gamma(0) = p$. Furthermore, we know that the induced action of $H$ on $T_{p}M$ is polar with $T_{p}\Sigma$ as a section, cf.~\cite[Thm.~4.6]{pt1}. Thus there is an isometry $h \in H$ such that $h \cdot \gamma'(0) \in T_{p}\Sigma$. Since $\Sigma$, as a section of a polar action, is totally geodesic \cite[Thm.~3.2]{pt}, this implies that $h \cdot \gamma(\R) \subseteq \Sigma$.

Now it follows from \cite[Thm.~2.3]{ks} that $M$ is locally symmetric.
\end{proof}

In~\cite{discala}, the following result is shown, which is a generalization of~\cite[Thm.~3]{kp}.

\begin{theorem}\label{th:antonio}
Let $M$ be a homogeneous Riemannian manifold. Let $H$ be a Lie group of isometries acting polarly on~$M$. If the $H$-action on $M$ has a fixed point and a section for the $H$-action is a compact locally symmetric space, then $M$ is locally symmetric.
\end{theorem}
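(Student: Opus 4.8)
The plan is to follow the template of the proof of Lemma~\ref{lm:CohTwo}, replacing the two-dimensional flat section by an arbitrary compact locally symmetric one, and to phrase local symmetry of~$M$ as the vanishing of~$\nabla R$ at the fixed point. First I would fix a section~$\Sigma$ of the polar $H$-action. Since a section meets every orbit and $p$ is a fixed point, we have $p\in\Sigma$; by \cite[Thm.~3.2]{pt} the section~$\Sigma$ is totally geodesic, and by \cite[Thm.~4.6]{pt1} the slice representation of~$H$ on~$T_pM$ is polar with section~$T_p\Sigma$. Exactly as in Lemma~\ref{lm:CohTwo} this yields the geodesic property: given any geodesic $\gamma$ of~$M$, homogeneity lets me assume $\gamma(0)=p$, a suitable $h\in H$ moves $\gamma'(0)$ into~$T_p\Sigma$, and total geodesy of~$\Sigma$ forces $h\cdot\gamma(\R)\subseteq\Sigma$. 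Here compactness enters to guarantee that $\Sigma$ is complete and, being locally symmetric, admits a Riemannian symmetric cover, so these geodesics stay in~$\Sigma$.

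Second, I would reformulate the conclusion as $\nabla R\equiv 0$, where $R$ denotes the curvature tensor of~$M$. By homogeneity it suffices to prove $(\nabla_X R)_p=0$ for all $X\in T_pM$; since every isometry preserves $\nabla R$ and $H$ fixes~$p$, polarity of the slice representation lets me rotate $X$ into~$T_p\Sigma$ and hence assume $X\in T_p\Sigma$. For all four arguments $X,Y,Z,W$ tangent to~$\Sigma$, total geodesy (vanishing second fundamental form) identifies, via the Gauss and Codazzi equations, the tensor $(\nabla_X R)(Y,Z)W$ with the intrinsic $(\nabla^\Sigma_X R^\Sigma)(Y,Z)W$, with no normal part. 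As $\Sigma$ is locally symmetric, $\nabla^\Sigma R^\Sigma=0$, so all of these ``tangential'' components of $(\nabla R)_p$ vanish.

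The main obstacle is the remaining components of $(\nabla_X R)_p$, namely those in which some of $Y,Z,W$ lie in the normal space~$N_p\Sigma$. Here I would exploit two further structures. Each $A\in\g h$ generates a Killing field vanishing at~$p$, so the isotropy representation preserves $(\nabla R)_p$; written infinitesimally this gives, for every~$A$, a derivation identity expressing invariance of $(\nabla R)_p$ under the skew-symmetric endomorphism $A_*$ of~$T_pM$. On the other hand, polarity of the slice representation yields, for any regular $v\in T_p\Sigma$, the equality $N_p\Sigma=\{A_*v : A\in\g h\}$, since the orbit tangent space $\g h\cdot v$ is orthogonal to $T_p\Sigma$ and of complementary dimension. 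Thus every normal vector has the form $A_*v$. Feeding such vectors into the invariance identities and combining them with the algebraic symmetries of $\nabla R$—in particular the second Bianchi identity, which interchanges the differentiation slot with the curvature slots—should propagate the vanishing from the all-tangential components to those with one, and then several, normal entries, forcing $(\nabla R)_p=0$ and hence local symmetry of~$M$. Controlling this propagation is the delicate point: one must check that the linear relations produced by invariance and by the Bianchi identities are nondegenerate enough to annihilate every mixed component. It is precisely here that the symmetric-space structure of the compact section is expected to supply the needed rigidity, since by Dadok's theorem the polar slice representation is orbit-equivalent to an $s$-representation.
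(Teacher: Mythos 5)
Your proposal reproduces correctly the easy half of the argument, but it does not prove the theorem. The first part (the section $\Sigma$ contains the fixed point $p$, is totally geodesic, the slice representation at $p$ is polar with section $T_p\Sigma$, every geodesic through $p$ can be mapped into $\Sigma$ by some $h\in H$) is exactly the reduction carried out in the proof of Lemma~\ref{lm:CohTwo}, and your observations that $(\nabla R)_p$ is $H$-invariant, that the all-tangential components of $(\nabla R)_p$ vanish by total geodesy and local symmetry of $\Sigma$, and that $N_p\Sigma=\{A_*v : A\in\g{h}\}$ for $v\in T_p\Sigma$ regular, are all correct. The genuine gap is the step you yourself flag as ``delicate'': you never prove that the components of $(\nabla R)_p$ with one or more entries in $N_p\Sigma$ vanish. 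The phrases ``should propagate'' and ``is expected to supply the needed rigidity'' stand in for the entire mathematical content of the theorem; no lemma or computation backs them up, and there is no a priori reason the linear relations coming from $\g{h}$-invariance and the Bianchi identities annihilate the mixed components. Note that this is precisely the point at which the paper itself does not argue: Theorem~\ref{th:antonio} is quoted from~\cite{discala} with no proof given, and in the parallel Lemma~\ref{lm:CohTwo} the identical reduction ends by delegating exactly this hard step to~\cite[Thm.~2.3]{ks}. So what you have written is the known reduction plus a conjecture that the remaining algebra works out.

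There is also a structural reason to distrust the proposed mechanism: it makes no essential use of the compactness of $\Sigma$, which is a hypothesis of the theorem. Your two stated uses of compactness are vacuous --- $\Sigma$ is complete anyway, being a closed totally geodesic submanifold of the complete (because homogeneous) manifold $M$, and the symmetric cover is never invoked afterwards. If invariance plus Bianchi plus tangential vanishing sufficed, the theorem would hold for every closed locally symmetric section, which would render superfluous not only the compactness assumption here but also the flatness and low-dimension assumptions under which the analogous results are proved in~\cite{hptt}, \cite{samiou} and~\cite{ks}; the fact that all of these papers need such hypotheses is strong evidence that the pointwise tensor algebra at $p$ alone is not enough. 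A concrete place where compactness does enter: since $H$ fixes $p$, its closure $\overline{H}$ in the isometry group is compact, and since every geodesic through $p$ lies in some $h^{-1}(\Sigma)$, one gets $M=\overline{H}\cdot\Sigma$, so $M$ itself is compact --- global information that a purely infinitesimal argument at $p$ cannot see. To repair the proof you would have to carry out the propagation to the mixed components explicitly, identifying where compactness is used, or else cite \cite{discala} (or \cite[Thm.~2.3]{ks} in the appropriate cases) for this step, as the paper does.
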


%
%

\section{Non-symmetric spaces with polar linear isotropy}
\label{sec:expl}

We will show in this section that the isotropy actions of the spaces in Proposition~\ref{prop:kpss}~(ii) and~(iii) are non-polar by exhibiting non-polar slice representations. Note that for a homogeneous space~$G/H$, the slice representation of the $H$-action on~$G/H$ at the point~$gH$ is equivalent to the action given by $\Ad_G|_{H \cap gHg^{-1}}$ on~$\g g / (\g h + \Ad_G(g)(\g h))$.

\begin{proposition}\label{prop:hermnp}
The isotropy actions of the spaces in Proposition~\ref{prop:kpss}~(iii) are non-polar.
\end{proposition}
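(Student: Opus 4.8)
The plan is to exhibit, for each pair $(G,H)$ in Proposition~\ref{prop:kpss}~(iii), a point $gH\in G/H$ whose slice representation is non-polar, and to do this by reducing to the simple case already settled in~\cite{kp}. Recall that the slice representation of the $H$-action on $G/H$ at $gH$ is the action of $\Ad_G|_{H\cap gHg^{-1}}$ on $\g{g}/(\g{h}+\Ad_G(g)\g{h})$. I would choose $g$ supported in a single simple factor $G_{i_0}$ so that the slice module splits off, as a direct summand, the non-polar slice representation of the non-symmetric simple space $G_{i_0}/(G_{i_0}\cap H)$. Indeed, $G_{i_0}\cap H=K_{i_0}^{\mathrm{ss}}$ is the semisimple part of $K_{i_0}=\pi_{i_0}(H)$, the space $G_{i_0}/K_{i_0}^{\mathrm{ss}}$ is not symmetric and does not occur in the list of Theorem~\ref{th:kpmain}, so its isotropy action is non-polar by \cite{kp}; let $g_0\in G_{i_0}$ be an element realizing such a non-polar slice. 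Non-polarity of the full slice would then follow from the decomposition result of~\cite{dadok} used in the proof of Lemma~\ref{lm:polinvsc}: a reducible representation of a compact group is polar only if the induced representation on each invariant summand is polar.

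For the bookkeeping I would write $H=H_{\mathrm{ss}}\cdot Z$, where $H_{\mathrm{ss}}=\prod_i K_i^{\mathrm{ss}}$ and $Z=Z(H)^{0}$ is the central torus, which by~(iii) projects onto the circle factor $\U1$ of each $K_i$ and has rank strictly smaller than $m$ (otherwise $(G,H)$ would be decomposable). Setting $\Gamma:=H\cap g_0Hg_0^{-1}$, the ideal $\g{g}_{i_0}$ is $\Ad_G(\Gamma)$-invariant, so the slice module splits as $W=W_{i_0}\oplus W'$ with $W_{i_0}$ the image of $\g{g}_{i_0}$, and $\Gamma$ acts on $W_{i_0}$ through its projection to $G_{i_0}$. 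The goal is to identify $W_{i_0}$, together with this action, with the non-polar simple slice of $G_{i_0}/K_{i_0}^{\mathrm{ss}}$ at $g_0K_{i_0}^{\mathrm{ss}}$, and then to invoke the summand criterion above.

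The main obstacle is controlling the central torus $Z$, which ties the factors together and a priori could \emph{polarize} the localized slice. Two points need to be checked. First, since $g_0$ does not centralize the generator $\zeta_{i_0}$ of the circle $\U1\subseteq K_{i_0}$, the condition $g_0^{-1}zg_0\in H$ forces the $\U1$-component of $z\in Z$ to vanish; hence the projection of $\Gamma$ to $G_{i_0}$ contains no compensating circle and acts on $W_{i_0}$ essentially through $K_{i_0}^{\mathrm{ss}}$, reproducing the non-polar representation of~\cite{kp} rather than its polarization. Here indecomposability together with the rank condition in~(iii) are exactly what guarantee that this circle is genuinely broken. Second, the central directions $\g{z}$ and $\Ad_G(g_0)\g{z}$ enlarge $\g{h}+\Ad_G(g_0)\g{h}$ beyond its analogue in the simple case, so that $W_{i_0}$ is a priori a proper quotient of the simple slice module; I expect the harder part of the argument to be a direct computation at the explicit $g_0$ showing that this extra collapsing removes only trivial or polar summands and leaves the non-polar part of $W_{i_0}$ intact. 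Once both points are verified, $W_{i_0}$ is non-polar, hence so is $W$, and therefore the isotropy action of $H$ on $G/H$ is non-polar.
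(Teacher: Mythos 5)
Your overall strategy---localizing the non-polarity in a single simple factor $G_{i_0}$---cannot work, because for the pairs of Proposition~\ref{prop:kpss}~(iii) the non-polarity is genuinely a cross-factor phenomenon. The first concrete failure is your claim that $G_{i_0}/(G_{i_0}\cap H)$ is non-symmetric, absent from the list of Theorem~\ref{th:kpmain}, and hence has a non-polar slice. Condition~(iii) allows every factor $(G_i,\pi_i(H))$ to be a rank-one Hermitian pair $(\SU{n+1},\SUxU{n}{1})$ or the pair $(\Sp{n+1},\Sp{n}\cdot\U1)$; in those cases $G_i\cap H$ is $\SU{n}$ resp.\ $\Sp{n}$, and the pairs $(\SU{n+1},\SU{n})$, $(\Sp{n+1},\Sp{n})$ \emph{do} occur in Theorem~\ref{th:kpmain}. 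These quotients are spheres admitting round (symmetric) metrics for which the isotropy action is polar, so \emph{all} of their slice representations are polar (polarity of a slice representation is independent of the invariant metric by Lemma~\ref{lm:polinvsc}), and there is no non-polar slice $g_0$ for you to import. Since Proposition~\ref{prop:kpss}~(iii) contains pairs all of whose factors are of this kind---this is exactly Example~\ref{ex:SUxSU}, $(\SU{n+1}\times\SU{m+1},\,\SU{n}\times\SU{m}\times\U1)$ with diagonal circle---your proof has no valid starting factor for precisely the spaces that motivate the proposition.

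The second failure is the circle-breaking claim. It is true that $Z(H)\cap g_0Hg_0^{-1}$ is finite for a suitable $g_0$, but this does not imply that the projection of $\Gamma=H\cap g_0Hg_0^{-1}$ to $G_{i_0}$ contains no circle: $\Gamma$ contains ``mixed'' elements whose central and semisimple parts compensate each other. Concretely, in Example~\ref{ex:SUxSU} with $g=(g_1,e)$ one computes that $\Gamma$ consists of the pairs $\bigl(\diag(z,C,z),\diag(B,z)\bigr)$ with $C\in\U{n-1}$, $B\in\U{m}$, $z\in\U1$ (subject to the determinant-one constraints), so its projection to $G_1$ contains the full circle of elements $\diag(z,C,z)$; this circle acts on the corner slice $V_1\cong\C$ by $u\mapsto zu\bar{z}=u$, i.e.\ the localized slice representation is \emph{trivial}, hence polar---the opposite of what you need. (Indeed the whole slice at this $gH$ is $\C\oplus\C^m$ with trivial action on the first summand and the standard $\U{m}$-action on the second, which is polar.) This is why the paper's proof takes a completely different shape: it chooses $g$ with non-trivial components in the relevant factors, observes that each summand $V_i$ is by itself a \emph{polar} $Q$-module, and extracts non-polarity only from the coupled (diagonal) action of the central torus on the direct sum $V_1\oplus\dots\oplus V_m$, using Bergmann's criterion and an induction on the rank of the center that ends in a contradiction with indecomposability. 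No single-factor summand of the slice is non-polar, so the summand criterion you invoke can never be triggered by localization.
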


\begin{example}\label{ex:SUxSU}
	In order to illustrate the proof of Proposition~\ref{prop:hermnp} we consider the following example:
	\[
	M=\frac{\Sph{2n+1}\times \Sph{2m+1}}{\Sph{1}}\cong
	\frac{\SU{n+1}\times \SU{m+1}}{\SU{n}\times \SU{m}\times \U{1}},
	\]
	where the action of $\Sph{1}=\U{1}$ is diagonal.
	
	Then, $M$ can be written as $M=G/H$, where $G=G_1\times G_2$, $G_1=\SU{n+1}$, $G_2=\SU{m+1}$, and
	\[
	H=\left\{\left.\left(\left(
	\begin{array}{@{\,}c|c@{\,}}
	A	&\\
	\hline
	&	z\\
	\end{array}\right),	
	\left(\begin{array}{@{\,}c|c@{\,}}
	B	&\\
	\hline
	&	z
	\end{array}\right)
	\right)\in G\ \right|
	A\in\U{n},
	B\in\U{m},
	z\in\U{1}
	\right\}.
	\]
	As usual we denote by $\pi_i\colon G\to G_i$ the projection. Then, we have $K_1=\pi_1(H)\cong\liegr{S}(\U{n}\times \U{1})$ and $K_2=\pi_2(H)\cong\liegr{S}(\U{m}\times \U{1})$. If we put $H_i=H\cap G_i$, then $H_1\cong \SU{n}$ and $H_2\cong\SU{m}$. In particular, $K_1\cong H_1\cdot\U{1}$ and $K_2\cong H_2\cdot\U{1}$, and the $\U{1}$-factor is inessential in this case.

	We take $g_1 \in G_1$ such that
	\[
	g_1K_1g_1^{-1} = \left\{ \left.
	\left(\begin{array}{c|c}
	z &    \\ \hline
	& A
	\end{array}\right) \in \SU{n+1}
	\right| A \in \U{n}, z \in \U1 \right\}.
	\]
	The idea is to consider the slice representation of $G_1/K_1$ at $g_1K_1$.  First,
	\[
	g_1 K_1 g_1^{-1} \cap K_1 = \left\{ \left.
	\left(
	\begin{array}{c|c|c}
	z &   &   \\  \hline
	& A  &   \\  \hline
	&   & w
	\end{array}
	\right) \in \SU{n+1}\
	\right|
	\begin{array}{l}
	A \in \U{n-1},  \\[1ex]
	z,w \in \U1
	\end{array} \right\},
	\]
	and the normal space at the point $g_1 K_1$ of $G_1/K_1$ can be identified with
	\[
	V_1:=\nu_{g_1 K_1}(K_1\cdot g_1 K_1)
	\cong\frac{\g{g}_1}{\g{k}_1+\Ad(g_1)\g{k}_1}\cong
	\left\{ \left.
	\left(
	\begin{array}{c|c|c}
	&   & u  \\  \hline
	&     &   \\  \hline
	-\bar u   &
	\end{array}
	\right)  \in \g{su}(n+1)\
	\right|
	u \in \C
	\right\}.
	\]
	The slice representation of $G_1/K_1$ at $g_1K_1$ is then equivalent to the adjoint action of $g_1 K_1 g_1^{-1}\cap K_1$ on $V_1\cong\C$.  This representation is equivalent to $(z,A,w)\cdot u=zu\bar{w}$, with $z$, $w\in\U{1}$, $A\in\U{n-1}$ satisfying $zw\det A=1$, and $u\in\C$.  Hence, the effectivized slice representation is equivalent to the standard representation of~$\U1$ on~$\C$.  A similar argument can be made for $G_2$.
	
	Now let $g=(g_1,g_2)\in G$, and consider the slice representation of $M=G/H$ at $gH$. It turns out that
	\[
	Q:=g H g^{-1}\cap H=
	\left\{\left.\left(\left(
	\begin{array}{@{\,}c|c|c@{\,}}
	z&&\\
	\hline
	& A	&\\
	\hline
	&&	w\\
	\end{array}\right),
	\left(\begin{array}{@{\,}c|c|c@{\,}}
	z&&\\
	\hline
	& B	&\\
	\hline
	&&	w
	\end{array}\right)
	\right)\in G\ \right|
	\begin{array}{@{}l}
	A\in\U{n-1},\\[0ex]
	B\in\U{m-1},\\[0ex]
	z,w\in\U{1}
	\end{array}
	\right\},
	\]
	and the normal space at the point $gH$ of $G/H$ can be identified with $V:=V_1\oplus V_2$, that is,
	\[
	V=\nu_{g H}(H\cdot gH)
	\cong\frac{\g{g}}{\g{h}+\Ad(g)\g{h}}\cong
	\left\{\left.\left(
	\left(\begin{array}{@{\,}c|c|c@{\,}}
	&&u\\
	\hline
	& &\\
	\hline
	-\bar{u} &&\\
	\end{array}\right),
	\left(\begin{array}{@{\,}c|c|c@{\,}}
	&& v\\
	\hline
	& &\\
	\hline
	-\bar{v} &&
	\end{array}\right)
	\right)\in \g{g}\ \right|
	u,v\in\C
	\right\}.
	\]
	Then, the slice representation of $H$ at $gH$ is equivalent to the adjoint action of~$Q$ on $V\cong \C^2$, and this is equivalent to the action $(A,B,z,w)\cdot (u,v)=(zu\bar{w},zv\bar{w})$. Therefore, the slice representation of $G/H$ at $gH$ is orbit equivalent to the diagonal action of $\U{1}$ on $\C^2$, which is non-polar. Indeed, note that $\pi_i(Q)$ acts polarly on each factor $V_i$ via the standard action of $\U{1}$ on $\C$.  Nevertheless, the action of $Q$ on $V$ is diagonal, not orbit equivalent to the product action $\pi_1(Q)\times\pi_2(Q)$ on $V=V_1\oplus V_2$, and hence, by~\cite[Prop.~1~(ii)]{bergmann}, cannot be polar.
\end{example}

Now we turn our attention to the general case.

\begin{proof}[Proof of Proposition~\ref{prop:hermnp}]
We will show that there is an element $g = (g_1, \dots, g_m) \in G$ such that the isotropy action of~$H$ on~$G/H$ has a non-polar slice representation at~$gH$.  This is equivalent to the slice representation of the $gHg^{-1}$-action at~$eH \in G/H$ being non-polar.

We will determine the components $g_i \in G_i$ of the desired element~$g \in G$ for each factor individually. Let $H_i = H \cap G_i$ and let $K_i=H_i \cdot \U1$.  Then, for each~$i$, either $(G_i,K_i)$ is an irreducible Hermitian symmetric pair or $(G_i,K_i) = (\Sp{n+1},\Sp{n}\cdot\U1)$. We deal with several different cases according to the conditions described in Remark~\ref{rm:essential}.

\begin{enumerate}
  \item First assume $(G_i,K_i)$ is a Hermitian symmetric pair such that the $\U1$-factor is essential for the representation $\chi (G_{i}, K_i)$. For these indices~$i$ we choose $g_i=e$, the identity element of~$G_i$.

  \item Now assume $(G_i,H_i)$ is one of the pairs in Theorem~\ref{th:kpth2}, i.e.\ $(G_i,K_i)$ is a Hermitian symmetric pair of rank~$\ge 2$ such that the $\U1$-factor is inessential for the representation $\chi (G_{i}, \pi_{i}(K))$, i.e.\ one of
      \begin{enumerate}
        \item $(\SU{p+q},\SU p\times\SU q), 2 \le p<q$;
        \item $(\Spin{2n},\SU n), n \ge5\hbox{~odd}$;
        \item $(\E6,\Spin{10})$.
      \end{enumerate}
      Choose the element~$g_i$ as in~\cite[Thm.~13]{kp}. Then the slice representation of the $gK_ig^{-1}$-action on~$G_i/K_i$ at~$eK_i$ is given by the following representations:
      \begin{enumerate}
        \item the linear action of~$\SUxU{p}{p}$ on~$\C^{p}\otimes\C^{p}$, i.e.\ the isotropy representation of $\SU{2p}/\SUxU{p}{p}$;
        \item the linear action of~$\U{n-1}$ on $\Lambda^2 \C^{n-1}$, i.e.\ the isotropy representation of $\SO{2m}/\U{m}$, $m =n-1$~even;
        \item the linear action of~$\Spin8 \cdot \U1$ on~$\R^8 \otimes \R^2$, i.e.\ a linear action which is equivalent to the isotropy representation of $\SO{10}/\SO{8}\times\SO{2}$ after effectivization.
      \end{enumerate}
      Note that the above representations are equivalent to isotropy representations of Hermitian symmetric pairs where the $\U1$-factor is essential, see the remarks after Theorem~\ref{th:kpth2}.
      The slice representation of the  $g_iH_ig_i^{-1}$-action on~$G_i/H_i$ at~$eH_i$ then contains as a submodule:
      \begin{enumerate}
        \item the linear action of~$\SU{p}\times\SU{p}$ on~$\C^{p}\otimes\C^{p}$,
        \item the linear action of~$\SU{n-1}$ on $\Lambda^2 \C^{n-1}$,
        \item the linear action of~$\Spin8$ on~$\R^8 \oplus \R^8$,
      \end{enumerate}
      see~\cite[Thm.~13]{kp}.

  \item If $(G_i,K_i)$ is a Hermitian symmetric pair of rank one, which is only the case if we have \[(G_i,K_i) = (\SU{n+1},\SUxU{n}1),\] we assume that
      \[
      K_i = \left\{ \left.
      \left(\begin{array}{@{\,}c|c@{\,}}
       A &    \\ \hline
        & z
      \end{array}\right) \in \SU{n+1}
      \right| A \in \U{n}, z \in \U1 \right\}.
      \]
      In this case let $g_i \in \SU{n+1}$ be such that
      \[
      g_iK_ig_i^{-1} = \left\{ \left.
      \left(\begin{array}{@{\,}c|c@{\,}}
       z &    \\ \hline
        & A
      \end{array}\right) \in \SU{n+1}
      \right| A \in \U{n}, z \in \U1 \right\}.
      \]
      With a similar computation as in Example~\ref{ex:SUxSU} we find that
       \[
      g_iK_ig_i^{-1} \cap K_i = \left\{ \left.
      \left(
      \begin{array}{@{\,}c|c|c@{\,}}
       z &   &   \\  \hline
        & A  &   \\  \hline
        &   & w
      \end{array}
      \right) \in \SU{n+1}
      \right|
      \begin{array}{c}
      A \in \U{n-1},  \\
      z,w \in \U1
      \end{array} \right\},
      \]
      whose slice representation is given by its action on the matrices
       \[
      \left\{ \left.
      \left(
      \begin{array}{@{\,}c|c|c@{\,}}
         &   & u  \\  \hline
        &     &   \\  \hline
        -\bar u   &
      \end{array}
      \right)  \in \g{su}(n+1)
      \right|
      u \in \C
      \right\}.
      \]
      The effectivized slice representation is thus equivalent to the standard representation of~$\U1$ on~$\C$, and hence its restriction to~$g_iH_ig_i^{-1} \cap H_i$ is trivial.

  \item The case where \[(G_i,K_i) = (\Sp{n+1},\Sp{n}\cdot\U1),\] will be treated similarly.
      In this case, let
      \[
      K_i = \left\{ \left.
      \left(\begin{array}{@{\,}c|c@{\,}}
       A &    \\ \hline
        & z
      \end{array}\right) \in \Sp{n+1}
      \right| A \in \Sp{n}, z \in \U1 \right\},
      \]
      assuming that the elements of~$\Sp{n+1}$ are given by quaternionic $(n+1) \times (n+1)$-matrices,
      and choose $g_i \in \Sp{n+1}$ such that
      \[
      g_iK_ig_i^{-1} = \left\{ \left.
      \left(\begin{array}{@{\,}c|c@{\,}}
       z &    \\ \hline
        & A
      \end{array}\right) \in \Sp{n+1}
      \right| A \in \Sp{n}, z \in \U1 \right\}.
      \]
      We obtain
       \[
      g_iK_ig_i^{-1} \cap K_i = \left\{ \left.
      \left(
      \begin{array}{@{\,}c|c|c@{\,}}
       z &   &   \\  \hline
        & A  &   \\  \hline
        &   & w
      \end{array}
      \right) \in \Sp{n+1}
      \right|
      \begin{array}{c}
      A \in \Sp{n-1},  \\
      z,w \in \U1
      \end{array} \right\},
      \]
      whose slice representation is given by its action on the matrices
       \[
      \left\{ \left.
      \left(
      \begin{array}{@{\,}c|c|c@{\,}}
         &   & q  \\  \hline
        &     &   \\  \hline
        -\bar q   &
      \end{array}
      \right)  \in \g{sp}(n+1)
      \right|
      q \in \H
      \right\}.
      \]
      The effectivized slice representation is equivalent to the componentwise action of~$\U1 \times \U1$ on~$\C \oplus \C$ by the standard representations, and hence its effectivized restriction to~$g_iH_ig_i^{-1} \cap H_i$ is given by the restriction to the diagonal subgroup of~$\U1 \times \U1$.
\end{enumerate}

Let now $V_i$ be the slice representation of the $g_iK_ig_i^{-1}$-action on~$G_i/K_i$ at~$eK_i$, and consider the representation $V = V_1 \oplus \dots \oplus V_m$. Its restriction to~$Q:=gHg^{-1} \cap H$ appears as a submodule of the slice representation of the $gHg^{-1}$-action on~$G/H$ at~$eH$. Note that each $V_i$ is by itself a polar representation of~$Q$. 
Assume that the restriction of~$V$ to~$Q$ is polar. Then, by~\cite[Prop.~1~(iii)]{bergmann}, the action of~$Q$ on $V\ominus V_m=V_1\oplus\dots\oplus V_{m-1}$ is orbit equivalent to the action of $Q_{v_m}$ on $V\ominus V_m$, where $Q_{v_m}$ is the isotropy subgroup  of the $Q$-action on $V_m$ at a regular element $v_m\in V_m$. Since by construction the action of the center of $Q$ on each $V_i$ has the vector $0$ as the only fixed point, and the center of each $\pi_i(H)=K_i$ is one-dimensional, we get that the rank of the center of $Q_{v_m}$ is $\max\{0,r-1\}$, where $r$ stands for the rank of the center of $Q$. By an inductive argument, we deduce that the action of $Q$ on $V_1$ is orbit equivalent to the action of the isotropy group $Q_{(v_2,\dots,v_m)}$, where $v_i\in V_i$ is regular, and the rank of the center of $Q_{(v_2,\dots,v_m)}$ is $\max\{0,r-m+1\}$. By construction of the representation $V$, if the rank of the center of $Q_{(v_2,\dots,v_m)}$ is zero, then its action on $V_1$ (which has the same orbits as the action of $g_1H_1g_1^{-1}\cap H_1$) is not orbit equivalent to the $Q$-action on $V_1$ (which has the same orbits as the action of $g_1K_1g_1^{-1}\cap K_1$). Hence we must have $r=m$. But, since our construction also has the property that the centers of $H$ and~$Q$ have the same rank, we get that the rank of the center of $H$ is $m$. However, since the pair~$(G,H)$ is indecomposable, the rank of the center of~$H$ must be less than~$m$. This proves by contradiction that the $H$-action on~$G/H$ is non-polar.
\end{proof}

\begin{proposition}\label{prop:quatex}
	The isotropy action of the following homogeneous space, where the $\Sp1$-factor is diagonally embedded,
	\begin{equation*}\label{eq:asac}
	G/H = \frac{\Sp{a+1}\times\Sp{b+1}}{\Sp{a}\times\Sp{b}\times\Sp{1}},\quad a,b\ge1,
	\end{equation*}
	is non-polar.
\end{proposition}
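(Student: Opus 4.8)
The plan is to mimic Example~\ref{ex:SUxSU} and case~(iv) of the proof of Proposition~\ref{prop:hermnp}: I will exhibit a point $gH$ at which the slice representation of the $H$-action on $G/H$ is the diagonal action of $\SO4 \cong (\Sp1\times\Sp1)/\{\pm1\}$ on $\R^4\oplus\R^4$, and then invoke \cite[Prop.~1~(ii)]{bergmann} to conclude that this diagonal representation is non-polar, exactly as the diagonal $\U1$-action on $\C^2$ was in the example. Write $G = G_1 \times G_2 = \Sp{a+1}\times\Sp{b+1}$ with elements of $\Sp{n+1}$ realized as quaternionic $(n+1)\times(n+1)$ matrices, and realize $H$ as the subgroup of pairs $(\diag(A,q), \diag(B,q))$ with $A\in\Sp a$, $B\in\Sp b$ and a common $q\in\Sp1$ in the last diagonal slot, so that the $\Sp1$-factor is diagonal and $H\cap G_i=\Sp a$ (resp.\ $\Sp b$).

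First I would choose $g = (g_1, g_2)$ with each $g_i$ the permutation interchanging the first and last quaternionic coordinate, so that conjugation by $g_i$ moves the distinguished slot from the last to the first coordinate. A direct computation then gives
\[
Q := gHg^{-1}\cap H = \bigl\{\bigl(\diag(q', \tilde A, q),\, \diag(q', \tilde B, q)\bigr) : \tilde A\in\Sp{a-1},\ \tilde B\in\Sp{b-1},\ q,q'\in\Sp1\bigr\},
\]
where the \emph{same} pair $(q',q)$ of unit quaternions occurs in both components, because the diagonal $\Sp1$ in $H$ forces the last-slot entries to agree and the diagonal $\Sp1$ in $gHg^{-1}$ forces the first-slot entries to agree. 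Here $\Sp{a-1}$, $\Sp{b-1}$ are understood to be trivial when $a=1$ or $b=1$.

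Next I would identify the normal space $V = \g g/(\g h + \Ad(g)\g h)$. Just as in Example~\ref{ex:SUxSU}, in each factor only the off-diagonal quaternion linking the first and last coordinate survives, so $V = V_1\oplus V_2$ with $V_i\cong\H$, and the slice representation of $Q$ on $V\cong\H\oplus\H$ is
\[
(q', q)\cdot(v, w) = (q' v\bar q,\ q' w\bar q).
\]
I would check that the conditions coming from the diagonal embedding of $\Sp1$ are automatically satisfied, since they constrain entries that are already forced to vanish; hence the diagonal gluing does not shrink $V$. Each projection $\pi_i(Q)$ acts on $V_i\cong\H$ through the full group $\SO4$, and is therefore polar of cohomogeneity one.

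The crucial point is that the $Q$-action on $V$ is diagonal: one and the same element of $\SO4$ acts on $V_1$ and on $V_2$ simultaneously. Thus the $Q$-action on $V_1\oplus V_2$ is not orbit-equivalent to the product action $\pi_1(Q)\times\pi_2(Q)$ (the diagonal $\SO4$ has cohomogeneity three on $\R^4\oplus\R^4$, with invariants $|v|^2,|w|^2,\langle v,w\rangle$, whereas the product $\SO4\times\SO4$ has cohomogeneity two), so \cite[Prop.~1~(ii)]{bergmann} shows that it cannot be polar. Consequently the $H$-action on $G/H$ has a non-polar slice representation at $gH$ and is itself non-polar. The main thing to get right is precisely the bookkeeping showing that both $V_i$ are acted on by the \emph{same} $\SO4$-element; this diagonal coupling, forced by the diagonal embedding of $\Sp1$, is what destroys polarity, while the remaining quaternionic matrix computations are routine and parallel to case~(iv) of the proof of Proposition~\ref{prop:hermnp}.
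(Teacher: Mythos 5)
Your proposal is correct and follows essentially the same route as the paper's proof: the same choice of $g$ (the paper only asserts its existence, you realize it as a coordinate swap, yielding the same $gHg^{-1}$), the same intersection $Q=gHg^{-1}\cap H$, and the same $8$-dimensional slice $V\cong\H\oplus\H$ on which $Q$ acts, after effectivization, as the diagonal $\SO4$ on $\R^4\oplus\R^4$. The only difference is the final step: the paper quotes \cite[Lemma~2.9]{kollross} (a representation containing two equivalent non-trivial irreducible submodules is non-polar), whereas you deduce non-polarity from \cite[Prop.~1~(ii)]{bergmann} via the cohomogeneity count ($3$ for the diagonal action versus $2$ for the product action), exactly as the paper itself argues for the diagonal $\U1$-action on $\C^2$ in Example~\ref{ex:SUxSU}; both justifications are valid.
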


\begin{proof}
	We will show that there is an element $g \in G$ such that the isotropy action of~$H$ on~$G/H$ has a non-polar slice representation at~$gH.$  This is equivalent to the slice representation of the $gHg^{-1}$-action at~$eH \in G/H$ being non-polar.
	Let
	\[
	H = \left\{\left.
	\left(\left(
	\begin{array}{@{\,}c|c@{\,}}
	A &    \\ \hline
	& q
	\end{array}
	\right),
	\left(
	\begin{array}{@{\,}c|c@{\,}}
	B &    \\ \hline
	& q
	\end{array}
	\right)\right)\right|
	A \in \Sp{a}, B \in \Sp{b}, q \in \Sp{1}
	\right\}.
	\]
	There is an element $g \in G$ such that
	\[
	gHg^{-1} = \left\{\left.
	\left(\left(
	\begin{array}{@{\,}c|c@{\,}}
	q &    \\ \hline
	& A
	\end{array}
	\right),
	\left(
	\begin{array}{@{\,}c|c@{\,}}
	q &    \\ \hline
	& B
	\end{array}
	\right)\right)\right|
	A \in \Sp{a}, B \in \Sp{b}, q \in \Sp{1}
	\right\}.
	\]
	The slice representation at the point~$eH$ of the $gHg^{-1}$-action on~$G/H$ is given by the representation of the group
	\[
	H \cap gHg^{-1} = \left\{\left.
	\left(\left(
	\begin{array}{@{\,}c|c|c@{\,}}
	p &   &   \\  \hline
	& A  &   \\  \hline
	&   & q
	\end{array}
	\right),
	\left(
	\begin{array}{c|c|c}
	p &   &   \\  \hline
	& B  &   \\  \hline
	&   & q
	\end{array}
	\right)\right)
	\right|
	\begin{array}{c}
	A \in \Sp{a-1}, \\
	B \in \Sp{b-1}, \\
	p,q \in \Sp{1}
	\end{array}
	\right\}
	\]
	on the $8$-dimensional normal space to the $gHg^{-1}$-orbit through~$eH$, which is given by
	\[
	\left\{\left.
	\left(\left(
	\begin{array}{@{\,}c|c|c@{\,}}
	&   & x  \\  \hline
	&   &   \\  \hline
	-\bar x &   &
	\end{array}
	\right),
	\left(
	\begin{array}{@{\,}c|c|c@{\,}}
	&   & y  \\  \hline
	&   &   \\  \hline
	-\bar y  &   &
	\end{array}
	\right)\right) \in \g {sp}(a+1) \times \g {sp}(b+1)
	\right|
	x,y \in \H
	\right\}.
	\]
	It is easy to verify that this representation, after its effectivity kernel is factored out, is equivalent to the action of~$\SO4$ on $\R^4 \oplus \R^4$, i.e.\ the direct sum of two copies of the standard representation. It is well known that this representation is non-polar, see~\cite[Lemma~2.9]{kollross}. Since all slice representations of polar actions are polar~\cite[Thm.~4.6]{pt}, this shows that the isotropy action of~$H$ on~$G/H$ is non-polar.
\end{proof}

\begin{proof}[Proof of Theorem~\ref{th:main}]
Assume first that $G$ is simple. It follows from Theorem~\ref{th:kpmain} that $G/H$ is locally symmetric. Since $G$ is simply connected and $H$ is connected, we have that the Riemannian manifold~$(G/H,\mu)$ is simply connected, complete and locally symmetric. Hence $(G/H,\mu)$ is a Riemannian globally symmetric space.

If $G$ is non-simple, then we may assume that $(G,H)$ is one of the pairs as described in parts (i), (ii), or (iii) of Proposition~\ref{prop:kpss}.
We treat these three cases separately:
\begin{enumerate}
\item The spaces as given by part~(i) of the proposition are symmetric. Indeed, they are homogeneous presentations $L \times L / \Delta L$ of simple compact Lie groups~$L$ with biinvariant Riemannian metric. Since these spaces are isotropy irreducible, their $G$-invariant Riemannian metrics are uniquely determined up to scaling.
\item The spaces as in part~(ii) have been shown to have non-polar isotropy in Proposition~\ref{prop:quatex} if $b \ge 1$. The space
\[
G/H=\frac{\Sp{a+1}\times \Sp1}{\Sp a \times \Sp1}
\]
is diffeomorphic to~$\eS^{4a+3}$. Recall that the subgroup $\Sp{a+1}$ of~$G$ also acts transitively on~$\eS^{4a+3}$. Moreover, the space of all $\Sp{a+1}\times \Sp1$-invariant Riemannian metrics on~$G/H$ is a subset of the space of~$\Sp{a+1}$-invariant Riemannian metrics.  Therefore, this case is already covered by Theorem~\ref{th:kpmain}.

\item Finally, the pairs as described in part~(iii) of Proposition~\ref{prop:kpss} have been shown in Proposition~\ref{prop:hermnp} to have non-polar slice representations.
\end{enumerate}

This completes the proof of our main result.
\end{proof}

We obtain the following observation. Recall that an isometric action is called \emph{infinitesimally polar} if all of its slice representations are polar.

\begin{corollary}\label{cor:inf}
Let $G$ be a simply connected compact semisimple Lie group and let $H$ be a closed connected subgroup.
If the isotropy action of~$H$ on~$G/H$ is {infinitesimally polar} with respect to some $G$-invariant metric, then there exists a symmetric $G$-invariant Riemannian metric on~$G/H$.
\end{corollary}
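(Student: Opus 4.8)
The plan is to deduce the corollary from the classification of spaces with polar linear isotropy, using the non-polarity results of this section to discard everything that is not symmetric. The leverage is that infinitesimal polarity constrains $(G,H)$ already through the slice at the base point, while at the same time forbidding the non-polar slices exhibited elsewhere in this section.

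First I would observe that $eH \in G/H$ is a fixed point of the isotropy $H$-action, so the slice representation there is precisely the linear isotropy representation $\chi(G,H)$. Hence, if the isotropy action is infinitesimally polar for some $G$-invariant metric, then $\chi(G,H)$ is polar; by Lemma~\ref{lm:polinvsc} this is a metric-independent property, so Theorem~\ref{th:kpth2} applies when $G$ is simple and Proposition~\ref{prop:kpss} when $G$ is semisimple. Since a decomposable pair presents $G/H$ as a Riemannian product on which the isotropy action splits as a product action, and both the hypothesis of infinitesimal polarity and the conclusion pass to and from direct factors, I may assume $(G,H)$ indecomposable and read off the resulting finite list of candidates.

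Next I would eliminate every non-symmetric candidate on that list by pointing to a non-polar slice, which is incompatible with infinitesimal polarity. The pairs of Proposition~\ref{prop:kpss}~(iii) are handled by Proposition~\ref{prop:hermnp}, those of~(ii) with $b \ge 1$ by Proposition~\ref{prop:quatex}, and the three exceptional simple pairs of Theorem~\ref{th:kpth2} by the single-factor instance of the computation in the proof of Proposition~\ref{prop:hermnp} (equivalently, by \cite[Thm.~13]{kp}). After these deletions the surviving pairs are the symmetric pairs, the diagonal pairs $(L \times L, \Delta L)$ of~(i), the case $b=0$ of~(ii), and the rank-one type pairs $(\SU{n+1},\SU n)$, $(\Sp{n+1},\Sp n)$, $(\Sp{n+1},\Sp n\cdot\U1)$, $(\Spin9,\Spin7)$, $(\Spin7,\G)$, $(\G,\SU3)$ of Theorem~\ref{th:kpmain}. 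Each of these carries a natural symmetric $G$-invariant metric: the rank-one type spaces are, respectively, the spheres $\eS^{2n+1}$, $\eS^{4n+3}$, the complex projective space $\mathbb{CP}^{2n+1}$, and the spheres $\eS^{15}$, $\eS^7$, $\eS^6$; the case $b=0$ of~(ii) is again a sphere; the diagonal pairs give a compact simple Lie group with biinvariant metric; and the symmetric pairs give their symmetric spaces. All of these are Riemannian symmetric for the indicated $G$-invariant metric, which is exactly the assertion.

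The proof is thus essentially a bookkeeping consequence of what has already been established, since a single non-polar slice defeats infinitesimal polarity. The main obstacle is to confirm that the non-polar slices of Propositions~\ref{prop:hermnp} and~\ref{prop:quatex}, together with \cite{kp} for the simple case, genuinely exhaust all the non-symmetric entries of the classification. Should one prefer to re-derive the three exceptional simple pairs here rather than cite \cite{kp}, the one non-obvious step is that the non-polar submodule displayed in the proof of Proposition~\ref{prop:hermnp}---two copies of the standard $\Spin8$-representation, or a tube-type Hermitian isotropy representation with its circle factor removed---forces the full slice to be non-polar. This is guaranteed by the decomposition of sections recalled in the proof of Lemma~\ref{lm:polinvsc}: the nontrivial irreducible summands of a polar representation are mutually inequivalent and a section splits accordingly, so every invariant subspace of a polar representation is itself polar; contrapositively, a non-polar invariant subspace obstructs polarity of the whole slice.
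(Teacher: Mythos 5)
Your proposal is correct and follows essentially the same route as the paper's own proof: polarity of the slice at $eH$ gives polar linear isotropy and hence the classification, the non-symmetric entries are eliminated by the non-polar slice representations of Propositions~\ref{prop:hermnp} and~\ref{prop:quatex} and the proof of~\cite[Thm.~13]{kp}, and the surviving pairs visibly carry symmetric $G$-invariant metrics. If anything, you are slightly more careful than the paper at two points: you treat the case $b=0$ of Proposition~\ref{prop:kpss}~(ii) explicitly as a sphere with its round invariant metric (the paper's proof lumps it among the "remaining pairs" with non-polar slices, even though Proposition~\ref{prop:quatex} requires $b\ge 1$ and this pair is in fact infinitesimally polar), and you spell out both the reduction to indecomposable pairs and the fact that an invariant subspace of a polar representation is again polar, which the elimination step silently uses.
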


\begin{proof}
Assume $G$ is a simply connected compact semisimple Lie group and $H \subset G$ a closed connected subgroup such that the isotropy action is infinitesimally polar. In particular, the linear isotropy representation $\chi(G,H)$ is polar, as it is the slice representation of $H$ at $eH$.
All such pairs $(G,H)$ are symmetric pairs or are given by Theorems~\ref{th:kpmain} and \ref{th:kpth2} in case $G$~is simple and by Proposition~\ref{prop:kpss} in case $G$~is non-simple. We show that for each such pair a symmetric metric exists or the isotropy action of~$H$ on~$G/H$ has a non-polar slice representation.

If $(G,H)$ is a symmetric pair, then with any $G$-invariant Riemannian metric, $G/H$~is symmetric and the result is obvious in this case.

Consider now the non-symmetric pairs given in~Theorem~\ref{th:kpmain}. For $(\SU{n+1},\SU{n})$, $(\Sp{n+1},\Sp{n}\times\U1)$, $(\Spin9,\Spin7)$, the cohomogeneity of the $H$-action on~$G/H$ is two, for $(\Spin7,\G)$, $(\G,\SU3)$ it is one. Since orthogonal representations of cohomogeneity~one and two are polar, the $H$-actions on~$G/H$ are infinitesimally polar in these cases. For the action of~$\Sp{n}$ on~$\Sp{n+1}/\Sp{n}$, one can directly check that the slice representations at the two singular points are polar. It is well known that for these six pairs~$(G,H)$ the space~$G/H$ carries a symmetric metric.

For the remaining pairs~$(G,H)$ it is shown in~Propositions~\ref{prop:hermnp},~\ref{prop:quatex} above and the proof of~\cite[Th.~13]{kp} that the $H$-action on~$G/H$ has a non-polar slice representation, so there is nothing to check in this case.
\end{proof}

\begin{remark}
The space $\Sph{7}\times\Sph{7}$ obviously admits a symmetric metric. Nevertheless, take the transitive $\Spin{8}$-action on $\Sph{7}\times\Sph{7}$ given by any two inequivalent non-trivial representations of $\Spin{8}$ on $\R^8$.  It has the isotropy group~$\G$, whose isotropy representation contains two equivalent $\G$-modules and is thus not polar by~\cite[Lemma~2.9]{kollross}. Therefore, the converse of Corollary~\ref{cor:inf} does not hold.
\end{remark}

One can see from Theorem~\ref{th:noncompact-nonpolar} in the next section that the corollary above does not hold anymore if the assumption that $G$ is compact is dropped.

%
%

\section{Some remarks on the non-compact semisimple case}
\label{sec:lin}

We first observe that for semisimple~$G$ the question of polar linear isotropy can be reduced to the compact case via duality of symmetric spaces.

\begin{proposition}\label{prop:dual}
Let $G$ be a simply connected semisimple Lie group and let $H$ be a compact connected subgroup such that the pair~$(G,H)$ is indecomposable.
Assume the isotropy representation~$\chi(G,H)$ of~$G/H$ is polar.
Then there is a Cartan decomposition $\g g = \g k + \g p$ such that $\g h \subseteq \g k$.  Moreover, $\chi(G^*, H)$ is polar and the pair~$(G^*,H)$ is indecomposable, where $\g g^* = \g k + i \g p$ and $G^*$ is the simply connected Lie group with Lie algebra~$\g g^*$.
\end{proposition}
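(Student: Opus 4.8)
The overall strategy is that passing to the compact dual alters only the $\g p$-part of $\g g$, on which $H$ acts in an essentially unchanged way, while the subalgebra $\g h$ lives entirely in the common piece $\g k$. First I would produce the Cartan decomposition. Since $H$ is compact, $\Ad_G(H)$ is a compact subgroup of the adjoint group $\Int(\g g)$ and is therefore contained in a maximal compact subgroup, whose Lie algebra is the fixed-point set $\g k$ of a Cartan involution $\theta$. Writing $\g g=\g k+\g p$ for the associated Cartan decomposition, faithfulness of $\ad$ on the semisimple algebra $\g g$ gives $\g h\subseteq\g k$, which is the first assertion and lets one form $\g g^*=\g k+i\g p$ inside the complexification together with the simply connected group $G^*$.

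Next I would show $\chi(G^*,H)\cong\chi(G,H)$ as $H$-modules. Choosing an $\Ad(H)$-invariant complement $\g m$ of $\g h$ in $\g k$, one has $\g g/\g h\cong\g m\oplus\g p$ and $\g g^*/\g h\cong\g m\oplus i\g p$ as $H$-modules, with $H\subseteq K$ acting through the complex-linear adjoint representation. As multiplication by $i$ commutes with each $\Ad(h)$, the map $v\mapsto iv$ is an $H$-equivariant isomorphism $\g p\to i\g p$, whence $\chi(G,H)\cong\chi(G^*,H)$. Since $H$ is compact, polarity of such a representation is independent of the invariant inner product by Lemma~\ref{lm:polinvsc}, so it depends only on the isomorphism class; as $\chi(G,H)$ is polar, so is $\chi(G^*,H)$.

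For the indecomposability of $(G^*,H)$ I would compare the decompositions into simple ideals. A Cartan involution preserves every simple ideal $\g g_j$ of $\g g$ — otherwise, mapping $\g g_j$ into a different ideal would give $B(X,\theta X)=0$ for all $X\in\g g_j$, contradicting positive-definiteness of $-B(\cdot,\theta\cdot)$ — so $\theta$ restricts to each $\g g_j=\g k_j+\g p_j$ and one obtains $\g k=\bigoplus_j\g k_j$, $\g p=\bigoplus_j\g p_j$, and $\g g^*=\bigoplus_j\g g_j^*$ with $\g g_j^*=\g k_j+i\g p_j$ the compact dual of $\g g_j$, together with the decisive identity $\g g_j\cap\g k=\g k_j=\g g_j^*\cap\g k$. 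Because $\g h\subseteq\g k$, the subalgebra $\g h$ meets $\g g$ and $\g g^*$ only through the common pieces $\g k_j$. I would then assume a nontrivial decomposition $\g g^*=\g a\oplus\g b$ with $\g h=(\g h\cap\g a)\oplus(\g h\cap\g b)$, group the indices $j$ according to the blocks containing $\g g_j^*$, and pull the splitting back to $\g g=\g g_A\oplus\g g_B$ respected by $\g h$; since $G$ and $G^*$ are simply connected and $H$ is connected, these Lie-algebra splittings are honest decompositions of the pairs, contradicting the indecomposability of $(G,H)$.

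The step I expect to be the main obstacle is this last pull-back, because the correspondence $\g g_j\leftrightarrow\g g_j^*$ is not one-to-one: when $\g g_j$ is of complex type (a complex simple Lie algebra regarded as real), its compact dual $\g g_j^*\cong\g u_j\oplus\g u_j$ is not simple, so a decomposition of $\g g^*$ may separate the two simple factors of $\g g_j^*$ although $\g g_j$ is itself an indecomposable ideal of $\g g$. The way around this is that $\g k_j=\g g_j^*\cap\g k$ is embedded diagonally in $\g u_j\oplus\g u_j$, so any separation of the two factors forces $\g h\cap\g g_j^*=\g h\cap\g k_j=0$; thus $\g h$ has no component in such an ideal, and the corresponding $\g g_j$ may be assigned to either block when forming $\g g_A\oplus\g g_B$. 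Verifying that this freedom can always be exploited so that both $\g g_A$ and $\g g_B$ are nontrivial — here one uses that $H$, hence $\g h$, is nontrivial, which excludes the degenerate case where $\g g$ is a single complex-type ideal — is the delicate point that completes the argument.
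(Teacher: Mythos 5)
Your proposal is correct in substance, and for the first two assertions it takes essentially the paper's route: the paper also produces the Cartan decomposition from a maximal compact subgroup containing~$H$, and also proves polarity of $\chi(G^*,H)$ by exhibiting an $H$-module equivalence $\chi(G,H)\cong\chi(G^*,H)$ (written there as $\chi(G^*,K)|_H\oplus\chi(K,H)$ with $\chi(G^*,K)\cong\chi(G,K)$, which is your $\g m\oplus\g p$ versus $\g m\oplus i\g p$). Your formulation of the first step, working with $\Ad_G(H)$ inside $\Int(\g g)$, is in fact the more careful one, since for non-compact simply connected~$G$ a maximal compact subgroup of~$G$ itself can have Lie algebra strictly smaller than a maximal compactly embedded subalgebra. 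The genuine divergence is in the indecomposability of $(G^*,H)$. The paper argues by cases on whether $\theta$ preserves or interchanges simple ideals, and in the interchange case appeals to the classification of symmetric spaces~\cite{helgason} to identify the relevant part of $(\g g^*,\g k)$ with $(\g u\oplus\g u,\Delta\g u)$ and then to Proposition~\ref{prop:kpss} to force $\g h=\g k$. You instead pull a hypothetical nontrivial decomposition of $(\g g^*,\g h)$ back to one of $(\g g,\g h)$. Your Killing-form observation that a Cartan involution fixes every simple ideal of~$\g g$ is exactly right; note that, read literally, it makes the paper's ``interchanged ideals of~$\g g$'' case vacuous, and the true dichotomy is the one you identify: whether some ideal of~$\g g$ is of complex type, so that its dual $\g u_j\oplus\g u_j$ is not simple. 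Your route is more elementary and self-contained --- no classification, no Proposition~\ref{prop:kpss}, and indeed no use of polarity at all in this step --- and it isolates the hypothesis that $H$ is nontrivial, which is genuinely needed: for $\g g=\g{sl}(2,\C)$ and $\g h=0$ the pair $(G,H)$ is indecomposable with polar (trivial) isotropy representation, yet $(G^*,H)$ with $\g g^*\cong\g{su}(2)\oplus\g{su}(2)$ is decomposable. That hypothesis is explicit in Theorem~\ref{th:main} but not in the Proposition, so your proof in effect records a needed reading of its statement.

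One step should be tightened. From ``the separation forces $\g h\cap\g g_j^*=0$'' you conclude that $\g h$ has no component in~$\g g_j^*$; in general a subalgebra can meet an ideal trivially and still project onto it (a diagonal does exactly this), so intersection zero does not by itself give projection zero. The one-line fix uses the decomposition hypothesis rather than the intersection: if the two factors of $\g g_j^*=\g u_j\oplus\g u_j$ are separated by $\g g^*=\g a\oplus\g b$, then $\g k\cap\g a$ and $\g k\cap\g b$ have zero component in~$\g g_j^*$, because that component lies in $\g k_j\cap\g u_j^{(1)}$ respectively $\g k_j\cap\g u_j^{(2)}$, and the diagonal $\g k_j$ meets each factor trivially; writing $X\in\g h$ as $X=X_{\g a}+X_{\g b}$ with $X_{\g a}\in\g h\cap\g a\subseteq\g k\cap\g a$ and $X_{\g b}\in\g h\cap\g b\subseteq\g k\cap\g b$ then shows that $X$ itself has zero component in~$\g g_j^*$. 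With this replacement, your concluding case analysis (both blocks contain a non-separated ideal; only one does; none does, which forces $\g h=0$ and is excluded by nontriviality of~$H$) closes exactly as you describe.
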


\begin{proof}
Since $H$ is a compact subgroup of~$G$, there is a maximal compact subgroup~$K$ of~$G$ containing~$H$. This maximal compact subgroup gives rise to a Cartan decomposition $\g g = \g k + \g p$. Define an involution $\theta \colon \g g \to \g g$ by requiring $\theta|_{\g k}=\id_{\g k}$ and $\theta|_{\g p}=-\id_{\g p}$.
Then the Lie algebra $\g g^* = \g k + \sqrt{-1}\, \g p$ is compact and semisimple. Let $G^*$ be the simply connected Lie group with Lie algebra~$\g g^*$.
We have $\chi(G^*, H) \cong \chi(G^*, K)|_H \oplus \chi(K, H)$ and $\chi(G^*,K) \cong \chi(G, K)$, and from this it follows that the isotropy representation~$\chi(G^*, H)$ is polar, since it is equivalent to the isotropy representation~$\chi(G, H) \cong \chi(G, K)|_H \oplus \chi(K, H)$.

It remains to show that the pair~$(G^*,H)$ is indecomposable. If $\theta$ maps each simple ideal of~$\g g$ onto itself, then the simple ideals of~$\g g^*$ are in one-to-one correspondence to the simple ideals of~$\g g$ and $(G^*,H)$ is indecomposable. Now assume there are at least two different simple ideals~$\g g_\lambda$ and $\g g_\nu$ such that $\theta(\g g_\lambda)=\g g_\nu$. Since $\theta$ is an automorphism of order two, it follows that we also have $\theta(\g g_\nu)=\g g_\lambda$. Now it follows from the classification of symmetric spaces~\cite{helgason} that $\g g^*$ is the direct sum of two isomorphic simple compact Lie algebras and $\g k$ is the diagonal subalgebra. It follows from Proposition~\ref{prop:kpss} that in this case~$\g h = \g k$ and thus $(G^*,H)$ is indecomposable.
\end{proof}

It is a consequence of the above proposition that the classification of homogeneous spaces of simple non-compact Lie groups with polar linear isotropy representation follows almost immediately from the results of~\cite{kp}:

\begin{corollary}\label{cor:ncslin}
Let $G$ be a simply connected non-compact simple Lie group, and let $H$ be a compact subgroup.
Then the linear isotropy representation $\chi(G,H)$ of the homogeneous space~$G/H$ is polar if and only if the pair~$(G,H)$ is either a symmetric pair or one of the following:
$(\SU{n,1},\SU{n})$; $(\Sp{n,1},\Sp{n})$; $(\Sp{n,1},\Sp{n}\times \U1)$; $(\Spin{8,1},\Spin{7})$; $(\SU{p,q},\SU p\times\SU q)$, where~$2 \le p<q$; $(\liegr{SO}^*(2n),\SU n)$, where $n \ge 5$ is odd; $(\liegr{E}_{6(-14)},\Spin{10})$.
\end{corollary}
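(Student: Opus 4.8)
The plan is to transfer the question entirely to the compact dual and then read the answer off from Theorems~\ref{th:kpmain} and~\ref{th:kpth2}. The crucial observation is that the isomorphism of $H$-representations established in the proof of Proposition~\ref{prop:dual}, namely $\chi(G,H) \cong \chi(G,K)|_H \oplus \chi(K,H) \cong \chi(G^*,H)$, requires only that the compact subgroup~$H$ lie in a maximal compact subgroup~$K$ with Cartan decomposition $\g g = \g k + \g p$, and does \emph{not} use polarity. Hence $\chi(G,H)$ is polar if and only if $\chi(G^*,H)$ is polar, where $\g g^* = \g k + \sqrt{-1}\,\g p$. Since $G$ is simple, $\g g^\C$ is simple, so its compact real form $\g g^*$ is simple as well; moreover $(G,H)$ is automatically indecomposable. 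As in Proposition~\ref{prop:dual}, we may assume $H$ is connected.

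With this equivalence in hand, both implications of the corollary reduce to a single bookkeeping problem. If $\chi(G,H)$ is polar, then $\chi(G^*,H)$ is polar for the simply connected simple compact group~$G^*$, so by Theorems~\ref{th:kpmain} and~\ref{th:kpth2} the pair $(G^*,H)$ is either a symmetric pair or one of the nine non-symmetric pairs listed there. Conversely, each pair in the statement has a compact dual of precisely this form, so $\chi(G^*,H)$---and therefore $\chi(G,H)$---is polar. It thus remains to match the admissible compact pairs $(G^*,H)$ with the non-compact real forms they give rise to: the maximal compact subalgebra $\g k \supseteq \g h$ is a symmetric subalgebra of~$\g g^*$, and $\g g$ is the non-compact dual of the symmetric pair $(\g g^*, \g k)$. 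The possible~$\g k$ can be enumerated using the classification of symmetric pairs~\cite{helgason}.

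I expect the enumeration of symmetric subalgebras $\g k$ with $\g h \subseteq \g k$ to be the step requiring the most care, although each individual verification is elementary. When $(G^*,H)$ is symmetric, $\g h$ is itself a symmetric subalgebra; one checks that for simple~$\g g^*$ a symmetric subalgebra is never properly contained in another proper symmetric subalgebra, since the two would have to act compatibly on the defining module of~$\g g^*$, which dimension counts and irreducibility properties exclude. Thus $\g k = \g h$ is forced, $H$ is a maximal compact subgroup, and $(G,H)$ is a non-compact symmetric pair with $G/H$ a symmetric space of non-compact type; conversely every such pair arises here. For each of the nine non-symmetric compact pairs the subalgebra~$\g k$ is uniquely determined: for instance $\g{su}(n) \subset \g s(\g u(n) \oplus \g u(1))$ yields $\g g = \g{su}(n,1)$, the inclusion $\g{su}(n) \subset \g u(n)$ yields $\g g = \g{so}^*(2n)$, and $\g{spin}(10) \subset \g{spin}(10)\oplus\R$ yields $\g g = \g{e}_{6(-14)}$, producing exactly the seven families of the statement.

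The one genuinely instructive point is that two of the nine compact pairs contribute nothing. For $(\Spin7,\G)$ the subalgebra~$\g{g}_2$ acts irreducibly on~$\R^7$, so it lies in none of the subalgebras $\g{so}(p) \oplus \g{so}(7-p)$, which are the only proper symmetric subalgebras of~$\g{so}(7)$; and for $(\G,\SU3)$ one has $\dim \g{su}(3) = 8 > 6 = \dim(\g{su}(2) \oplus \g{su}(2))$, excluding the unique proper symmetric subalgebra of~$\g{g}_2$. Hence these two pairs admit no non-compact counterpart, which is exactly why they are absent from the list. Together with the equivalence of polarity under duality, this establishes both directions of the corollary.
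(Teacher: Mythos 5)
Your overall strategy is the same as the paper's: reduce to the compact case via the representation isomorphism $\chi(G,H)\cong\chi(G,K)|_H\oplus\chi(K,H)\cong\chi(G^*,H)$ of Proposition~\ref{prop:dual} (which, as you correctly note, does not use polarity), then enumerate the symmetric subalgebras $\g k$ of $\g g^*$ containing the embedded $\g h$, with the same dispatching of $(\Spin7,\G)$ and $(\G,\SU3)$. However, your reduction rests on a statement that is false: it is \emph{not} true that $G$ simple implies $\g g^{\C}$ simple. This fails precisely when $G$ is a complex simple Lie group regarded as a real Lie group, e.g.\ $G=\liegr{SL}(2,\C)$, which is a simply connected non-compact simple Lie group and therefore falls under the corollary. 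For such $G$ one has $\g g^{\C}\cong\g g\oplus\g g$, the compact dual is $\g g^*\cong\g u\oplus\g u$ with $\g k=\Delta\g u$ the diagonal of the compact real form, and Theorems~\ref{th:kpmain} and~\ref{th:kpth2} simply do not apply. These groups must instead be handled through Proposition~\ref{prop:kpss}: since $\g h\subseteq\Delta\g u$, any subgroup of the diagonal projects injectively onto each factor, which rules out case~(ii); in case~(iii) the condition that $G_i\cap H$ equal the semisimple part of $\pi_i(H)$ forces $H$ to be a torus (as $G_i\cap H$ is trivial for $H\subseteq\Delta U$), and then the rank-of-center condition fails; so only case~(i), $\g h=\Delta\g u$, survives, i.e.\ only the symmetric pair $(G,U)$. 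The conclusion is consistent with the corollary, but your argument as written never establishes it. (To be fair, the paper's own proof is also silent about complex groups, but it does not assert the false intermediate claim.)

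Two smaller points of comparison, neither fatal. First, your blanket assertion that for the nine non-symmetric compact pairs ``the subalgebra $\g k$ is uniquely determined'' is correct in your bookkeeping, where the embedding $\g h\hookrightarrow\g g^*$ is fixed to be the polar one, but it silently covers the single case to which the paper devotes its only substantive computation: for $(\Spin9,\Spin7)$ the \emph{abstract} group $\Spin7$ also sits inside the symmetric subgroup $\Spin7\cdot\SO2\subset\Spin9$, and the paper excludes the resulting real form by showing the corresponding isotropy representation (two equivalent copies of $\R^7$ plus a trivial summand) is non-polar, leaving $\Spin{8,1}$ by a dimension count. In your setup the exclusion follows instead from irreducibility: the spin-embedded $\g{spin}(7)$ leaves no $7$-dimensional subspace of $\R^9$ invariant, so it lies in no conjugate of $\g{so}(7)\oplus\g{so}(2)$. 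That is a valid replacement for the paper's computation, but it should be stated, since this is the only pair with a competing embedding. Second, your justification of the no-nesting claim for symmetric subalgebras via ``the defining module of $\g g^*$'' is meaningless for the exceptional algebras, which have no defining module; there one needs rank and dimension counts instead (e.g.\ $\g{so}(16)\not\subset\g e_7\oplus\g{su}(2)$ inside $\g e_8$ because its rank is $8>7$). The claim itself is true and is needed equally, if tacitly, by the paper's proof.
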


\begin{proof}
To prove the corollary, it suffices to consider the pairs in Theorem~\ref{th:kpmain} and~\ref{th:kpth2} and to determine, up to automorphisms of~$G$, all symmetric pairs~$(G,K)$ with~$K$ connected, such that $H \subset K$ after conjugation and $\chi(G^*,H) \cong \chi(G,H)$.

Note that the space~$\G / \SU3$ does not have a non-compact Riemannian counterpart, since the non-compact real form of~$\G$ contains the maximal compact subgroup~$\SO4$ and hence does not contain a compact subgroup locally isomorphic to~$\SU3$; an analogous remark applies to the space $\Spin7/\G$.

In case $(G^*,H)=(\Spin9,\Spin7)$, first assume~$H=\Spin7$ is contained in the subgroup~$K=\Spin7\cdot\SO2$ of~$\Spin9$, then it follows that
\[
\chi(G,K)|_{H} \oplus \chi(K,H) \cong \chi(\Spin9,\Spin7\cdot\SO2)|_{\Spin7} \oplus \R,
\]
is the direct sum of two equivalent $7$-dimensional modules and a one-dimensional trivial module; in particular, $\chi(G,H)$ is non-polar by~\cite[Lemma~2.9]{kollross}; thus $G \cong \Spin{8,1}$ is the only remaining possibility by a dimension count.

In all other cases, there is exactly one possibility for such a group~$K$, as can be seen from a decomposition of $\chi(G^*,H)$ into invariant subspaces.
\end{proof}

For the case of semisimple, non-simple Lie groups~$G$ the classification follows from Proposition~\ref{prop:kpss} by the same method. We obtain:

\begin{corollary}\label{cor:nclinpoliso}
Let $G = G_1 \times \dots \times G_m$ be a simply connected Lie group, where the $G_i$ are simple normal factors, and let $H$ be a compact connected subgroup of~$G$ such that the pair~$(G,H)$ is indecomposable. Assume $m \ge 2$.
Then the linear isotropy representation $\chi(G,H)$ of the homogeneous space~$G/H$ is polar if and only if the pair~$(G,H)$ is either a symmetric pair or one of the following (up to automorphisms of~$G$):

\begin{enumerate}

\item
$(G,H)= (\Sp{a+1} \times \Sp{b+1}, \Sp a \times \Sp1 \times \Sp b)$, $a \ge 1, b \ge 0$;

\item
$(G,H)= (\Sp{a,1} \times \Sp{b+1}, \Sp a \times \Sp1 \times \Sp b)$, $a \ge 1, b \ge 0$;

\item
$(G,H)= (\Sp{a,1} \times \Sp{b,1}, \Sp a \times \Sp1 \times \Sp b)$, $a \ge 1, b \ge 1$;

\item
$(G,H)$, where each $(G_i, \pi_i(H))$ is either an irreducible Hermitian symmetric pair or one of $(\Sp{n+1}, \Sp{n}\cdot\U1)$, $(\Sp{n,1}, \Sp{n}\cdot\U1)$, the intersection $G_i \cap H$ agrees with the semisimple part of~$\pi_i(H)$ for each~$i$, and the rank of the center of~$H$ is greater or equal than the number of those indices~$i$
where the $\U1$-factor is essential for $(G_i, \pi_i(H))$.
\end{enumerate}
\end{corollary}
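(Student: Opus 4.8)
The plan is to argue exactly as in the proof of Corollary~\ref{cor:ncslin}, but feeding in Proposition~\ref{prop:kpss} in place of Theorems~\ref{th:kpmain} and~\ref{th:kpth2}. First suppose $\chi(G,H)$ is polar. Since $H$ is compact and $(G,H)$ is indecomposable, Proposition~\ref{prop:dual} supplies a Cartan decomposition $\g g = \g k + \g p$ with $\g h \subseteq \g k$ and shows that the compact dual pair $(G^*,H)$, where $\g g^* = \g k + \sqrt{-1}\,\g p$, is again indecomposable with polar isotropy representation $\chi(G^*,H)$. I would first dispose of the case in which the Cartan involution interchanges two simple ideals: as recorded in the proof of Proposition~\ref{prop:dual}, this forces $\g h = \g k$, so $H$ is the full maximal compact subgroup and $(G,H)$ is a symmetric pair. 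Otherwise the involution preserves each simple factor, so $G^*$ has the same number $m \ge 2$ of simple factors as $G$, and Proposition~\ref{prop:kpss} identifies $(G^*,H)$ as one of its pairs (i), (ii) or (iii).

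The reconstruction of the admissible non-compact $G$ rests on the isotropy isomorphism from the proof of Proposition~\ref{prop:dual}, namely $\chi(G,H) \cong \chi(G,K)|_H \oplus \chi(K,H)$ together with $\chi(G,K) \cong \chi(G^*,K)$. Thus replacing a compact factor $G_i^*$ by a non-compact real form $G_i$ with maximal compact subgroup $\hat K_i \supseteq \pi_i(H)$ and compact dual $(G_i^*,\hat K_i)$ leaves the isotropy module unchanged, and the classification reduces to listing, factor by factor, the symmetric pairs $(G_i,\hat K_i)$ with this property. Concretely: the diagonal factors $(L\times L,\Delta L)$ of Proposition~\ref{prop:kpss}~(i) correspond to the interchange case already treated and yield only symmetric pairs of complex type $(L^\C,L)$; each quaternionic factor $(\Sp{a+1},\Sp a\times\Sp1)$ of part~(ii) admits exactly one proper dual, $(\Sp{a,1},\Sp a\times\Sp1)$, with $\hat K_i = \pi_i(H)$, so dualizing neither, one, or both factors produces precisely cases (i), (ii) and (iii) of the corollary; and each factor of part~(iii), an irreducible compact Hermitian pair or $(\Sp{n+1},\Sp n\cdot\U1)$, dualizes to its non-compact Hermitian counterpart, respectively to $(\Sp{n,1},\Sp n\cdot\U1)$, which assembles into case~(iv).

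It then remains to settle the boundary behaviour and the converse. A factor admits a proper dual only when the underlying symmetric pair is non-trivial; in particular the degenerate factor $(\Sp1,\Sp1)$ arising for $b=0$ has no non-compact dual, which is exactly why case~(iii) requires $b\ge1$ whereas cases (i) and (ii) allow $b=0$. The converse is immediate from the isotropy isomorphism above: for every listed pair one has $\chi(G,H)\cong\chi(G^*,H)$, which is polar by Proposition~\ref{prop:kpss}, while symmetric pairs have polar isotropy representation. I expect the main obstacle to be the uniqueness assertion, i.e.\ ruling out a second symmetric pair $(G_i,\hat K_i)$ with $\hat K_i$ strictly larger than $\pi_i(H)$ that reproduces the same isotropy module; this is the phenomenon that made the distinction between $\Spin9$ and $\Spin{8,1}$ delicate in Corollary~\ref{cor:ncslin}. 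As there, I would settle it by decomposing $\chi(G^*,H)$ into invariant subspaces and comparing it with the isotropy representation of each candidate symmetric pair, the additional summand $\chi(\hat K_i,\pi_i(H))$ obstructing any enlargement of $\hat K_i$.
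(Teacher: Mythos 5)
Your proposal is correct and follows essentially the same route as the paper: the paper's own proof of Corollary~\ref{cor:nclinpoliso} is a single remark that "the classification follows from Proposition~\ref{prop:kpss} by the same method" as Corollary~\ref{cor:ncslin}, i.e.\ dualize via Proposition~\ref{prop:dual}, invoke the compact classification, and then determine factor by factor all symmetric pairs $(G_i,\hat K_i)$ with $\pi_i(H)\subseteq\hat K_i$ and unchanged isotropy module --- which is exactly what you carry out, including the correct treatment of the degenerate $b=0$ factor and the uniqueness of $\hat K_i$. Your write-up is simply a more detailed execution of the argument the paper leaves implicit (inheriting, harmlessly, the same slight imprecision as the paper's proof of Proposition~\ref{prop:dual} concerning factors of complex type, which in the indecomposable case with $m\ge2$ are excluded by Proposition~\ref{prop:kpss} anyway).
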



In the remainder of this section, we show that for the non-compact analogs of the non-symmetric examples in Theorem~\ref{th:kpmain} whose isotropy is of cohomogeneity two, the isotropy action is non-polar, regardless of which $G$-invariant Riemannian metric is chosen. We will use the following lemma.

\begin{lemma}\label{lm:smfct}
Let the connected simple Lie group~$G$ act isometrically on the simply connected Riemannian symmetric space~$M$. Let $M = M_0 \times M_1 \times \dots \times M_t$ be the De Rham decomposition of~$M$, where~$M_0$ is its Euclidean factor and $M_1,\dots, M_t$ are irreducible.
Assume that there is an index $i \in \{0, \dots, t\}$ such that $\dim(G) > \dim(I(M_i))$, where $I(M_i)$ is the isometry group of~$M_i$. Then the $G$-action on~$M$ acts trivially on the factor~$M_i$, i.e.\
\[
g \cdot (x_0, \dots, x_t) = (y_0, \dots, y_t) \Rightarrow x_i = y_i.
\]
\end{lemma}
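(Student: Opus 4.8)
The plan is to exploit the fact that the connected isometry group of a simply connected Riemannian symmetric space respects its De Rham decomposition, and then to combine this with the simplicity of~$G$ and a dimension count. The only structural input needed is that the identity component of the isometry group splits over the De Rham factors; once that is in place, the rest is an elementary argument.

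First I would recall the structure of the isometry group of a Riemannian product. Since $M$ is simply connected and complete, the De Rham decomposition $M = M_0 \times M_1 \times \dots \times M_t$ is essentially unique, and the identity component of its isometry group splits as a direct product
\[
I^0(M) = I^0(M_0) \times I^0(M_1) \times \dots \times I^0(M_t),
\]
each factor acting on the corresponding~$M_i$ and trivially on the others (this is classical, cf.~\cite{helgason}). The full group $I(M)$ may in addition permute mutually isometric factors, but such permutations are discrete and hence not realized inside the identity component. As $G$ is connected, the homomorphism describing its action takes values in~$I^0(M)$, so composing with the projection onto the $i$-th factor yields a continuous homomorphism $\psi_i \colon G \to I^0(M_i)$ with the property that $g$ acts on the $M_i$-coordinate precisely through~$\psi_i(g)$. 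Thus the asserted implication is equivalent to $\psi_i$ being trivial.

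Next I would use that $G$ is simple. The kernel of~$\psi_i$ is a closed normal subgroup of~$G$, so its Lie algebra is an ideal of the simple Lie algebra~$\g g$ and is therefore either $\{0\}$ or all of~$\g g$. In the first case $\ker\psi_i$ would be discrete and hence $\dim\psi_i(G)=\dim G$; but $\psi_i(G) \subseteq I^0(M_i)$ forces $\dim\psi_i(G) \le \dim I^0(M_i) = \dim I(M_i) < \dim G$, contradicting the hypothesis $\dim(G) > \dim(I(M_i))$. Hence the Lie algebra of~$\ker\psi_i$ is all of~$\g g$, so $\ker\psi_i$ contains the identity component of~$G$, which is~$G$ itself. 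Therefore $\psi_i$ is trivial, meaning that $g$ fixes the $M_i$-coordinate of every point, i.e.\ $x_i = y_i$, as claimed.

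The only genuinely nontrivial ingredient is the splitting $I^0(M)=\prod_i I^0(M_i)$ of the identity component over the De Rham factors, which I expect to be the main point to pin down carefully; after that, the argument is just the standard observation that a simple Lie group cannot map with discrete kernel into a group of strictly smaller dimension. Note also that the argument is uniform in~$i$, treating the Euclidean factor $M_0$ on the same footing as the irreducible factors, since only the inequality $\dim(G) > \dim(I(M_i))$ is used.
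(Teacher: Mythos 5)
Your proof is correct and takes essentially the same route as the paper's: both rest on the splitting $I_0(M)=I_0(M_0)\times I_0(M_1)\times\dots\times I_0(M_t)$ of the identity component of the isometry group over the De Rham factors (the paper cites Wolf, Thm.~8.3.9), followed by projecting to the $i$-th factor and applying simplicity of~$G$ together with the dimension inequality. The only cosmetic difference is that you run the kernel argument at the group level, whereas the paper phrases it as a Lie algebra homomorphism $\varphi\colon\g{g}\to\g{i}(M_i)$ whose nontrivial kernel forces $\varphi=0$; the two formulations are equivalent.
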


\begin{proof}
Let $I_0(M)$ and $I_0(M_i)$ denote the connected components of the isometry groups of~$M$ and~$M_i$.
By \cite[Thm.~8.3.9]{wolf} we have $I_0(M) =  I_0(M_0)  \times I_0(M_1) \times \dots \times I_0(M_t)$.
It follows that every element of~$I_0(M)$ is of the form $(x_0, \dots, x_t) \mapsto (g_0 \cdot x_0, \dots, g_t \cdot x_t)$, where $g_k \in I_0(M_k)$, in particular, the $G$-action on~$M$ induces a Lie algebra homomorphism $\varphi \colon \g{g} \to \g{i}(M_i)$ from the Lie algebra of~$G$ into the Lie algebra~$\g{i}(M_i)$ of~$I(M_i)$. Since $\dim(\g{g}) > \dim(\g{i}(M_i))$, the kernel of~$\varphi$ is non-trivial and it now follows that $\varphi=0$ as $\g{g}$ is simple. Since $G$ is connected, this proves the statement of the lemma.
\end{proof}

\begin{theorem}\label{th:noncompact-nonpolar}
Let $(G,H)$ be one of $(\SU{n,1},\SU{n})$, $n \ge 2$; $(\Sp{n,1},\Sp{n}\times\U1)$, $n \ge 1$; $(\Spin{8,1},\Spin7)$ and let $G/H$ be endowed with a $G$-invariant Riemannian metric. Then this Riemannian metric is not (locally) symmetric, and the isotropy action of~$H$ on~$G/H$ is infinitesimally polar but non-polar.
\end{theorem}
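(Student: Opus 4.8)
The plan is to show first that no $G$-invariant metric on $G/H$ is locally symmetric, and then to read off non-polarity from this together with a cohomogeneity count. Fix a $G$-invariant metric~$\mu$. Since $G$ is simply connected and $H$ is connected, $(G/H,\mu)$ is simply connected, and being homogeneous it is complete; so if it were locally symmetric it would be a globally symmetric space~$M$ on which $G$ acts transitively by isometries. I would first reduce $M$ to the irreducible non-compact case. In the de Rham decomposition $M=M_0\times M_1\times\dots\times M_t$, the projection of the transitive $G$-orbit to each factor is all of that factor, so $G$ acts transitively, hence nontrivially, on every positive-dimensional factor. The Euclidean factor $M_0$ must therefore be a point, because the simple group $G$ admits no homomorphism onto a positive-dimensional group of translations and so cannot act transitively on a Euclidean space. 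If $t\ge2$, the smallest irreducible factor $M_i$ has $\dim M_i\le\lfloor\dim(G/H)/2\rfloor$, whence $\dim I(M_i)\le\binom{\dim M_i+1}{2}<\dim G$ in each of the three cases; by Lemma~\ref{lm:smfct}, $G$ would then act trivially on~$M_i$, a contradiction. Thus $M$ is irreducible, and it is of non-compact type, since a non-compact simple Lie group has no nontrivial homomorphism into the compact group $I(M)$ and so cannot act transitively on a compact symmetric space.

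It remains to exclude the possibility that $M$ is irreducible of non-compact type. Let $K\supseteq H$ be a maximal compact subgroup of~$G$, so that $G/K$ is the rank-one symmetric space $\C H^n$, $\H H^n$, or $\R H^8$, and $G/H$ fibers over $G/K$ with compact fiber $K/H$. From the Iwasawa decomposition $G=KAN$ one gets a diffeomorphism $G/H\cong (K/H)\times\R^{\dim G/K}$. For $(\Sp{n,1},\Sp{n}\times\U1)$ the fiber is $K/H=\Sp1/\U1\cong\Sph{2}$, and for $(\Spin{8,1},\Spin7)$ it is $K/H=\Spin8/\Spin7\cong\Sph{7}$; hence $G/H$ is diffeomorphic to $\Sph{2}\times\R^{4n}$, respectively $\Sph{7}\times\R^{8}$, which have nontrivial higher homotopy groups. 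But a symmetric space of non-compact type is a Hadamard manifold, hence contractible, so these two cases cannot be symmetric.

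The hard part is the case $(\SU{n,1},\SU{n})$, where the fiber $K/H$ is one-dimensional and $G/H$ is contractible, so the topological obstruction is unavailable. Here I would combine a dimension count with a representation-theoretic obstruction. Since the simple algebra $\g{su}(n,1)$ embeds into $\g{i}(M)$, one has $\dim\g{i}(M)\ge n^2+2n$, while $\dim M=2n+1$ is odd; running through the classification of irreducible non-compact symmetric spaces, these two constraints leave only the real hyperbolic space $M=\R H^{2n+1}$ (possibly after checking finitely many low-dimensional cases), whose isometry algebra is $\g{so}(2n+1,1)$. But $\g{su}(n,1)$ does not embed into $\g{so}(2n+1,1)$: such an embedding would turn the standard $(2n+2)$-dimensional real representation of $\g{so}(2n+1,1)$ into a faithful real representation of $\g{su}(n,1)$ carrying an invariant symmetric form of signature $(2n+1,1)$; however, for $n\ge2$ the only faithful real representation of $\g{su}(n,1)$ of dimension $2n+2$ is the standard one on $\C^{n+1}$, whose invariant symmetric form is the real part of the Hermitian form and has signature $(2n,2)$. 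This contradiction shows that $\SU{n,1}$ cannot act transitively on $\R H^{2n+1}$, so $G/H$ is not locally symmetric in this case either.

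Finally I would establish the two polarity statements. The isotropy action of $H$ on $G/H$ fixes $eH$, and its slice representation there is the linear isotropy representation $\chi(G,H)$. By the duality of Proposition~\ref{prop:dual}, $\chi(G,H)$ is isomorphic as an $H$-module to the linear isotropy representation of the compact dual $G^*/H$, which is the cohomogeneity-two representation analysed in Corollary~\ref{cor:inf} (for instance $\C^n\oplus\R$ for $H=\SU{n}$); hence the $H$-action on $G/H$ has cohomogeneity two. Consequently every slice representation has cohomogeneity two and is therefore polar, so the action is infinitesimally polar. On the other hand, if the action were polar, then since it has a fixed point and cohomogeneity two, Lemma~\ref{lm:CohTwo} would force $G/H$ to be locally symmetric, contradicting what was proved above; therefore the isotropy action of~$H$ on~$G/H$ is non-polar. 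The main obstacle is the non-symmetry of $\SU{n,1}/\SU{n}$ in the third paragraph, where topology gives nothing and one must rule out all odd-dimensional irreducible non-compact symmetric targets and then exclude $\R H^{2n+1}$ by the signature obstruction.
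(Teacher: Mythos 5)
Your proposal is correct in outline, and its core is genuinely different from the paper's argument. Both proofs share the same logical skeleton: the cohomogeneity-two count gives infinitesimal polarity, and Lemma~\ref{lm:CohTwo} together with simple connectedness reduces everything to showing that no $G$-invariant metric is symmetric. The difference is in how non-symmetry is established. The paper works through the linear isotropy representation of the hypothetical symmetric metric: since $\chi(G,H)$ is not an s-representation, the full isotropy group must act with strictly larger orbits, forcing either transitivity on spheres (whence $M$ is $\R\HH^m$ or $\C\HH^m$, excluded by citing Galaev and Castrill\'on L\'opez--Gadea--Swann on transitive semisimple isometry groups of hyperbolic spaces) or a splitting with a low-dimensional factor (excluded by Lemma~\ref{lm:smfct}). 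You instead make a uniform de Rham reduction to the irreducible non-compact case, using Lemma~\ref{lm:smfct} and the bound $\dim I(M_i)\le\binom{\dim M_i+1}{2}$, which indeed beats $\dim G$ in all three cases. Then, for $(\Sp{n,1},\Sp{n}\times\U1)$ and $(\Spin{8,1},\Spin7)$, you use topology: $G/H\cong\eS^2\times\R^{4n}$, resp.\ $\eS^7\times\R^8$, is not contractible, whereas a symmetric space of non-compact type is Hadamard. This is more elementary than the paper's case analysis and needs no external citations. For $(\SU{n,1},\SU{n})$ you replace the Galaev/CLS citation by a self-contained signature obstruction: the only faithful real $(2n+2)$-dimensional representation of $\g{su}(n,1)$, $n\ge2$, is the realified standard one, whose invariant symmetric forms have signature $(2n,2)$, never $(2n+1,1)$, so $\g{su}(n,1)\not\hookrightarrow\g{so}(2n+1,1)$. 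Both substitutes are valid.

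One step you must make honest: the claim that oddness of $\dim M=2n+1$ together with $\dim\g{i}(M)\ge n^2+2n$ ``leaves only $\R\HH^{2n+1}$'' is literally false for $n=2$. The space $\mathsf{SL}(3,\R)/\SO{3}$ is an irreducible non-compact symmetric space of dimension $5$ whose isometry algebra has dimension exactly $8=\dim\g{su}(2,1)$, so it survives both constraints and must be excluded separately --- for instance, an embedding $\g{su}(2,1)\hookrightarrow\g{sl}(3,\R)$ between $8$-dimensional algebras would be an isomorphism, yet $\g{su}(2,1)$ and $\g{sl}(3,\R)$ are non-isomorphic real forms of $\g{sl}(3,\C)$ (their maximal compact subalgebras have dimensions $4$ and $3$). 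With that case handled, and with an explicit sweep confirming that no other rank~$\ge2$ space of odd dimension meets the isotropy-dimension bound for any $n$ (the exceptional and type~IV candidates are even-dimensional or fail the bound, and the classical families fail it for $n\ge3$), your argument is complete.
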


\begin{proof}
First, note that the cohomogeneity of the isotropy action is two in all cases, and thus, the action of $H$ on $G/H$ is infinitesimally polar.
Assume that the isotropy action of~$H$ on~$M = G/H$ is polar, where the pair $(G,H)$ is one of the pairs in
the assertion of the theorem and where $G/H$ is endowed with some $G$-invariant Riemannian metric~$\mu$.
Since the cohomogeneity of the isotropy action is two, Lemma~\ref{lm:CohTwo} shows that $(M,\mu)$ is locally symmetric. Since in all cases $G$ is simply connected and $H$ is connected, it follows that $M$ is simply connected and hence a Riemannian globally symmetric space by~\cite[Ch.~IV, Cor.~5.7]{helgason}.
Therefore, in order to prove Theorem~\ref{th:noncompact-nonpolar} it suffices to assume that $(M,\mu)$ is symmetric and reach a contradiction.
Note that $M$ is not flat, since a semisimple Lie group cannot act transitively on a Euclidean space by isometries.

\begin{enumerate}

\item {$M=\SU{n,1}/\SU{n}$.}  The isotropy representation of this space is equivalent to the $\SU{n}$-action on~$\C^n \oplus \R$, where $\SU{n}$ acts by its standard representation on the first summand and trivially on the second. Since this action is not an s-representation, i.e.\ it is not equivalent to the action of an isotropy group $I(S)_p$ of a Riemannian symmetric space~$S$ on the tangent space~$T_pS$, cf.~\cite[Ch.~X, \S6]{helgason}, the actual action of~$I(M)_p$ on~$T_pM$ will contain the above action as a proper subaction.

    First assume the action of~$I(M)_p$ on~$T_pM$ is irreducible. Since the principal orbits of~$\SU{n}$ on~$T_pM$ are $(2n-1)$-dimensional, and $I(M)_p$ leaves the $2n$-dimensional distance spheres around the origin in~$T_pM$ invariant, it follows that~$I(M)_p$ acts transitively on the unit sphere in $T_pM$, hence $M$ is an odd-dimensional simply connected non-compact Riemannian symmetric space of rank one, therefore homothetic to~$\R \HH^{2n+1}$. Then $G/H$ is a homogeneous presentation of~$\R \HH^{2n+1}$. However, by the results of~\cite[Thm.~6]{galaev} or \cite[Thm.~3.1]{cls}, any semisimple Lie group acting transitively and effectively on~$\R \HH^{2n+1}$ is isomorphic to~\SO{2n+1,1}, leading to a contradiction for all $n \ge 2$.

    If one assumes that the action of~$I(M)_p$ on~$T_pM$ is reducible, then it follows that $M$ is isometric to a symmetric space $N \times \R$ on which $G$ acts transitively. This leads to a contradiction by Lemma~\ref{lm:smfct} since $G$ is simple and $\dim(G) > \dim(I(\R))=1$.

	\item {$M=\Sp{n,1}/\Sp{n}\times\U1$.} In this case, the isotropy representation of~$G/H$ splits as $\C^{2n} \oplus \C$, where both modules are irreducible.

    Assume $M$ is reducible; then $M=M_1 \times M_2$, where $M_2$ is a symmetric space of dimension~$2$ (reducible or irreducible). Thus the isometry group~$I(M_2)$ is at most of dimension~$3$. This leads to a contradiction by Lemma~\ref{lm:smfct} since $G$ is simple and $\dim(G) \ge 10$.

    Hence $M$ is an irreducible symmetric space whose isotropy representation has the $\Sp{n}\times\U1$-representation on~$\C^{2n} \oplus \C$ as a proper subaction. Since this representation has $\eS^{4n-1} \times \eS^1$ as principal orbits, the isotropy representation of $I(M)_p$ must act transitively on the unit sphere in~$T_pM$. Since $\dim(M)=4n+2$, we have that $M$ is an $\R \HH^{4n+2}$ or a~$\C \HH^{2n+1}$. In case $M = \R \HH^{4n+2}$ we get a contradiction with~\cite[Thm.~6]{galaev} or \cite[Thm.~3.1]{cls} as in case~(i). In case $M = \C \HH^{2n+1}$, we get a contradiction with~\cite[Thm.~4.1]{cls}, since by~\cite{cls} any connected semisimple Lie group acting transitively and effectively on~$\C \HH^{2n+1}$ is isomorphic to~$\SU{2n+1,1}$.

	\item{$M=\Spin{8,1}/\Spin7$.} The isotropy representation~$\chi(G,H)$ is the direct sum $\R^7 \oplus \R^8$, where $H=\Spin7$ acts on~$\R^7$ by its standard representation and on~$\R^8$ by its spin representation. If the symmetric space~$M$ is irreducible, then it follows as in case~(i) that $M$ is homothetic to $\R \HH^{15}$. Using the result of~\cite[Thm.~6]{galaev} or \cite[Thm.~3.1]{cls} again, we arrive at a contradiction.

    If $M$ is reducible, then it follows that $M = X \times Y$ is the Riemannian product of a $7$-dimensional symmetric space~$X$ and an $8$-dimensional symmetric space~$Y$. Since we know that $M$ is not flat, we know that $M = X \times Y$ is a decomposition of~$M$ where either both factors are irreducible, or one factor is Euclidean and the other is irreducible.
    If $X$ is flat, then $I_0(X) \cong \R^7 \rtimes \SO7$. If $X$ is non-flat then $X$ is either an $\R\HH^7$ or an~$\eS^7$, since the isotropy representation acts transitively on the unit sphere in the tangent space. In all three cases, we get a contradiction with Lemma~\ref{lm:smfct}, since $\dim(I(X))=28$, $\dim(\Spin{8,1})=36$ and $\Spin{8,1}$ is simple.
\end{enumerate}
We have shown in all cases that any $G$-invariant Riemannian metric is non-symmetric and the isotropy action is non-polar.
\end{proof}

%
%

\section{The non-semisimple case}\label{sect:Heis-DR}

In this section we prove that two important families of homogeneous spaces with non-semisimple isometry groups, namely the generalized Heisenberg groups and the non-sym\-met\-ric Damek-Ricci spaces, do not have polar isotropy actions. We briefly recall the construction of these spaces and refer to~\cite{BTV} for more details.

Let $\g{n}=\g{v}\oplus\g{z}$ be a Lie algebra, equipped with an inner product $\langle \cdot,\cdot \rangle$ such that  $\langle \g{v},\g{z}\rangle=0$, and whose Lie bracket satisfies $[\g{v},\g{v}]\subseteq\g{z}$, $[\g{v},\g{z}]=[\g{z},\g{z}]=0$. Let $m=\dim \g{z}$. Define a linear map $J\colon \g{z}\to \mathrm{End}(\g{v})$ by
\[
\langle J_Z U, V\rangle=\langle [U,V],Z\rangle, \qquad \text{for all $U$, $V\in\g{v}$, $Z\in\g{z}$.}
\]
The Lie algebra $\g{n}$ is called a \emph{generalized Heisenberg algebra} if
\[
J_Z^2=-\langle Z,Z\rangle \id_\g{v}, \qquad \text{for all $Z\in\g{z}$.}
\]
The simply connected Lie group $N$ with Lie algebra $\g{n}$, when equipped with the left-invariant metric induced by $\langle \cdot,\cdot \rangle$, is called a \emph{generalized Heisenberg group}.

The map $J$ induces a representation of the Clifford algebra $\mathrm{Cl}(\g{z},q)$ on $\g{v}$, where $q=-\langle\cdot,\cdot\rangle\rvert_{\g{z}\times\g{z}}$. Conversely, a representation of such a Clifford algebra induces a map $J$ as above, and hence, a generalized Heisenberg group. Thus, the classification of these spaces follows from the classification of Clifford modules. If $m\not\equiv 3 \mod 4$, then there is exactly one irreducible Clifford module $\g{d}$ over $\mathrm{Cl}(\g{z},q)$, up to equivalence, and each Clifford module over $\mathrm{Cl}(\g{z},q)$ is isomorphic to $\g{v}\cong\oplus_{i=1}^k \g{d}$; we denote the corresponding generalized Heisenberg group by $N(m, k)$. If $m\equiv 3 \mod 4$, then there are precisely two irreducible Clifford modules, $\g{d}_+$ and $\g{d}_-$, over $\mathrm{Cl}(\g{z},q)$, up to equivalence, and each Clifford module over $\mathrm{Cl}(\g{z},q)$ is isomorphic to $\g{v}\cong\bigl(\oplus_{i=1}^{k_+}\g{d}_+\bigr)\oplus\bigl(\oplus_{i=1}^{k_-}\g{d}_-)$; we denote the corresponding group by $N(m,k_+,k_-)$.  As vector spaces, the irreducible modules $\g{d}$ (or $\g{d}_{\pm}$) are isomorphic to the vector spaces shown in Table~\ref{table:delta}. Moreover, if $m\equiv 3 \mod 4$, then $N(m,k_+,k_-)$ is isometric to $N(m,k_+',k_-')$ if and only if $(k_+',k_-')$ equals $(k_+,k_-)$ or $(k_-,k_+)$.

\begin{table}[h]
\renewcommand{\arraystretch}{1.5}
	\begin{tabular}{ccccccccc}
		\hline
		$m$ & $8p$ & $8p+1$ & $8p+2$ & $8p+3$ & $8p+4$ & $8p+5$ & $8p+6$ & $8p+7$
		\\ \hline
		$\g{d}_{(\pm)}$ & $\R^{2^{4p}}$ & $\C^{2^{4p}}$ & $\H^{2^{4p}}$ & $\H^{2^{4p}}$
		& $\H^{2^{4p+1}}$ & $\C^{2^{4p+2}}$ & $\R^{2^{4p+3}}$ & $\R^{2^{4p+3}}$
		\\
		\hline
	\end{tabular}
	\bigskip
	\caption{Irreducible Clifford modules}\label{table:delta}
\end{table}

Let $\g{a}$ be a one-dimensional real vector space with an inner product, let $B\in\g{a}$ be a unit vector, and $\g{n}$ a generalized Heisenberg algebra. We define the orthogonal direct sum $\g{s}=\g{a}\oplus\g{n}=\g{a}\oplus\g{v}\oplus\g{z}$, which we endow with the Lie algebra structure determined by $[B, U]=\frac{1}{2}U$ and $[B,Z]=Z$, for all $U\in\g{v}$ and $Z\in\g{z}$. The simply connected Lie group $S$ with Lie algebra $\g{s}$, endowed with the left-invariant Riemannian metric given by the inner product on~$\g{s}$, is called a \emph{Damek-Ricci space}, which will be denoted by $S(m,k)$ or $S(m,k_+,k_-)$ depending on the underlying generalized Heisenberg group with Lie algebra~$\g{n}$.

Damek-Ricci spaces are homogeneous and non-symmetric except for the following cases: $S(1,k)$, which is isometric to the complex hyperbolic space $\C \mathsf{H}^{k+1}$, $S(3,k,0)\cong S(3,0,k)$, which is isometric to the quaternionic hyperbolic space $\H \mathsf{H}^{k+1}$, and $S(7,1,0)\cong S(7,0,1)$, which is isometric to the Cayley hyperbolic plane $\Ca \mathsf{H}^2$.

Before studying generalized Heisenberg groups and Damek-Ricci spaces with polar isotropy we first consider those that have polar linear isotropy representation.

\begin{theorem}\label{th:linear:H-DR}
We have:
\begin{enumerate}
	\item The space $M$ is a generalized Heisenberg group with polar linear isotropy representation if and only if $M$ is one of \label{th:linear:H}
	\begin{align*}
		&&N(1,k), &&N(2,k), &&N(3,k_+,k_-), &&N(6,1), &&N(7,1,0)\cong N(7,0,1).
	\end{align*}
	
	\item The space $M$ is a non-symmetric Damek-Ricci space with polar linear isotropy representation if and only if $M$ is one of \label{th:linear:DR}
	\begin{align*}
		&S(2,k), &&S(3,k_+,k_-)\text{ with $k_+$, $k_-\geq 1$}, &&S(6,1).
	\end{align*}
\end{enumerate}
\end{theorem}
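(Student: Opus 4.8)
The plan is to compute the linear isotropy representation explicitly from the Clifford module structure, test polarity case by case, and then deduce the Damek--Ricci statement from the Heisenberg one. First I would recall from \cite{BTV} the description of the isotropy group $K$ of a generalized Heisenberg group $N$ and its action on $\g{n}=\g{v}\oplus\g{z}$. Writing $m=\dim\g{z}$, the connected isotropy group $K_{0}$ surjects onto $\SO{m}$ through its action on the centre $\g{z}\cong\R^{m}$, with kernel the centralizer $C$ of the Clifford structure; by Schur's lemma $C$ is $\OG{k}$, $\U{k}$ or $\Sp{k}$ (a product $\Sp{k_{+}}\times\Sp{k_{-}}$, resp.\ $\OG{k_{+}}\times\OG{k_{-}}$, when $m\equiv 3\bmod 4$) according to the division algebra $\mathrm{End}_{\mathrm{Cl}(\g{z},q)}(\g{d})$ read off from Table~\ref{table:delta}. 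Thus $K_{0}$ is the central product $\Spin{m}\cdot C$, acting on $\g{z}$ via $\Spin{m}\to\SO{m}$ and on $\g{v}$ as (spin representation)$\,\otimes\,$(standard representation of $C$). By Lemma~\ref{lm:polinvsc} polarity is independent of the invariant metric, so it suffices to analyse this single orthogonal representation for each $(m,k)$, resp.\ $(m,k_{+},k_{-})$.

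For the spaces listed as polar I would verify polarity directly. For $m=1$ the centre is fixed and $\U{k}$ acts on $\g{v}\cong\C^{k}$, giving cohomogeneity two. For $m=2$ one aligns $\g{z}\cong\R^{2}$ by $\Spin2$ and then normalises $v$ by $\Sp{k}$, which is transitive on the unit sphere of $\g{v}\cong\H^{k}$, so again the orbits are determined by $(|v|,|z|)$. The same argument handles $(m,k)=(6,1)$ and $(7,1,0)$: the stabilizer in $\Spin{m}$ of a unit vector of $\g{z}$ is $\Spin{m-1}$, which acts transitively on the unit sphere of the $8$-dimensional spin module $\g{v}$. Since orthogonal representations of cohomogeneity one and two are polar, all of these are polar. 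The case $m=3$ is the only one of higher cohomogeneity: here $\g{v}=\H^{k_{+}}\oplus\H^{k_{-}}$, $\g{z}=\operatorname{Im}\H$, and $\Spin3=\Sp1$ acts as a quaternionic scalar on $\g{v}$ and as $\SO3$ on $\g{z}$. Normalising each $v_{\pm}$ by $\Sp{k_{\pm}}$ and absorbing the residual $\Sp1$-phase by a diagonal element of $\Sp{k_{\pm}}$ shows that the orbits depend only on $(|v_{+}|,|v_{-}|,|z|)$, and the subspace spanned by a normalised $v_{+}$, a normalised $v_{-}$ and a fixed axis of $\operatorname{Im}\H$ is a section (orthogonality to the orbits follows from the quaternionic structure). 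Hence $N(3,k_{+},k_{-})$ is polar for all $k_{\pm}$.

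For every remaining value (that is, $m\in\{4,5\}$, $m\ge 8$, and the larger multiplicities $m=6,\ k\ge2$ and $m=7,\ (k_{+},k_{-})\neq(1,0),(0,1)$) I would show the representation is non-polar. Where the spin module already occurs with multiplicity as a $K_{0}$-submodule I would appeal to \cite[Lemma~2.9]{kollross}; in the genuinely tensorial cases I would instead exhibit a non-polar slice representation at a suitable point of $\g{v}\oplus\g{z}$, in the spirit of Propositions~\ref{prop:hermnp} and~\ref{prop:quatex}, or compare against Dadok's classification via \cite[Thm.~4]{dadok}: the cohomogeneity together with $\dim\g{v}$ forces any polar candidate to be orbit equivalent to a specific $s$-representation, and for these $m$ the spin-plus-vector module is not of that form. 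This case distinction --- certifying non-polarity uniformly once the dimension of the spin module outgrows the rank-one/two pattern --- is the main obstacle, since each excluded family needs either a repeated-module reduction or an explicit non-polar slice.

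Finally, part (ii) follows from part (i). For a non-symmetric Damek--Ricci space $S=AN$ the line $\g{a}$ is intrinsically characterised as the orthogonal complement of $[\g{s},\g{s}]=\g{v}\oplus\g{z}$, so the connected isotropy group fixes $\g{a}$ pointwise and acts on $\g{v}\oplus\g{z}$ exactly as the isotropy group of the underlying $N$. A representation is polar precisely when its sum with a trivial one-dimensional module is polar (extend or restrict the section by $\g{a}$, using the splitting of sections in \cite[Thm.~4]{dadok}), so $S(m,\dots)$ has polar linear isotropy if and only if $N(m,\dots)$ does. Intersecting the polar Heisenberg list from part (i) with the non-symmetric Damek--Ricci spaces --- which removes $S(1,k)\cong\C\HH^{k+1}$, the cases $S(3,k,0)\cong S(3,0,k)\cong\H\HH^{k+1}$, and $S(7,1,0)\cong\Ca\HH^{2}$ --- leaves exactly $S(2,k)$, $S(3,k_{+},k_{-})$ with $k_{\pm}\ge1$, and $S(6,1)$, as claimed.
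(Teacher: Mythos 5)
Your positive verifications (the cohomogeneity-two arguments for $N(1,k)$, $N(2,k)$, $N(6,1)$, $N(7,1,0)$, the explicit section for $N(3,k_+,k_-)$) and your reduction of part (ii) to part (i) --- adding a one-dimensional trivial summand neither creates nor destroys polarity, then discard the symmetric spaces --- are correct and agree in substance with the paper. The genuine gap is the converse direction, which is the actual content of the theorem: you never prove non-polarity for the excluded families $m\in\{4,5\}$, all $m\ge 8$, $N(6,k)$ with $k\ge 2$, and $N(7,k_+,k_-)$ with $k_++k_-\ge 2$. Your first tool, \cite[Lemma~2.9]{kollross}, requires two \emph{equivalent} irreducible submodules, and these are almost never present in the excluded cases: the irreducible summands of $\g{v}$ involve inequivalent (half-)spin representations of $\Spin{m}$, or are distinguished by the fact that each simple factor of $K$ acts nontrivially on only one of them, and none of them is equivalent to $\g{z}\cong\R^m$. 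For instance, in $N(5,k)$ the isotropy representation has exactly the two inequivalent summands $\g{d}\otimes_{\C}\C^k$ and $\g{z}$; in $N(4,k)$ and $N(8,k)$ the module $\g{v}$ splits into the two \emph{inequivalent} half-spin pieces; in $N(7,2,0)$ the module $\g{v}\cong\R^8\otimes\R^2$ is $\Spin7\cdot\SO2$-irreducible; and in $N(11,1,1)$ the two summands of $\g{v}$ are distinguished by the two $\Sp1$-factors of $K$. Essentially only $N(m,1,1)$ with $m\equiv 7 \bmod 8$ (where $K$ is trivial and $\g{d}_+\cong\g{d}_-$ as $\Spin{m}$-modules) is caught by that lemma. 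Your fallback --- ``exhibit a non-polar slice representation'' or ``compare against Dadok's classification'' --- is precisely the unexecuted hard step: it would require showing, for infinitely many pairs $(m,k)$, that $\g{d}\otimes\K^k\oplus\R^m$ is not orbit equivalent to any $s$-representation, and you give no argument for this; indeed you flag it yourself as ``the main obstacle.''

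The paper closes exactly this gap by invoking a classification result you do not use: a \emph{reducible} polar representation of a compact group must occur in Bergmann's classification \cite{bergmann}, organized in \cite[Appendix~9]{FGT} into standard, special and exceptional types. One checks that the isotropy representation is standard iff $m=1$, special iff $m\in\{2,3\}$, and exceptional for $m\ge 4$ (when the $H$-action on $\g{v}$ is irreducible); the exceptional reducible polar representations form a finite explicit table (\cite[Table~1]{bergmann}, \cite[Table~9.2]{FGT}), and dimension counts show that the only entries realizable as isotropy representations of generalized Heisenberg groups are $\SU4$ on $\R^6\oplus\R^8$, i.e.\ $N(6,1)$, and $\Spin7$ on $\R^7\oplus\R^8$, i.e.\ $N(7,1,0)\cong N(7,0,1)$. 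The case where the $H$-action on $\g{v}$ is reducible is disposed of by \cite[Lemma~9.5]{FGT}, which forces $m\in\{1,2,3\}$. Without this classification (or a uniform non-polarity argument replacing it), your proposal establishes only the ``if'' direction of part (i), and consequently only half of part (ii) as well.
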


\begin{proof}
Let $M$ denote either a generalized Heisenberg group or a non-symmetric Damek-Ricci space. In both cases, the isotropy group at the identity $e$ is given by the group of automorphisms of $M$ whose differential at $e$ is an orthonormal map~\cite[\S3.1.13 and \S4.1.12]{BTV}. Then, as follows from~\cite{Riehm} (see also~\cite{ks}), $H$, the connected component of the identity of the isotropy group of $M$, is given by
$\Spin{m}\cdot K$, where
\[
K=\begin{cases}
\SO{k}, & \text{ if } m\equiv 0,6 \mod 8,
\\
\U{k}, & \text{ if } m\equiv 1,5 \mod 8,
\\
\Sp{k}, & \text{ if } m\equiv 2,4 \mod 8,
\\
\Sp{k_+}\times \Sp{k_-}, & \text{ if } m\equiv 3 \mod 8,
\\
\SO{k_+}\times\SO{k_-}, & \text{ if } m\equiv 7 \mod 8.
\end{cases}
\]
If $m\not\equiv 3\mod 4$, $H$ acts on $\g{v}=\g{d}\otimes_\mathbb{K} \mathbb{K}^k$ via the irreducible representation $\g{d}$ of $\Spin{m}$ and the standard representation of $K$ on $\mathbb{K}^k$, where $\mathbb{K}=\mathbb{R},\mathbb{C},\mathbb{H}$ depending on whether $m\equiv 0,6$, $m\equiv 1,5$, or $m\equiv 2,4 \mod 8$, respectively. If $m\equiv 3 \mod 4$ then $H$ acts on $\g{v}=(\g{d}_+\otimes_\mathbb{K}\mathbb{K}^{k_+})\oplus(\g{d}_-\otimes_\mathbb{K}\mathbb{K}^{k_-})$ via the irreducible representations $\g{d}_{\pm}$ of $\Spin{m}$ and the standard representations of $\Sp{k_{\pm}}$ or $\SO{k_{\pm}}$, depending on whether $m\equiv 3$ or $m\equiv 7\mod 8$, respectively.

If $M$ is a generalized Heisenberg group, the isotropy representation is the action of $H=\Spin{m}\cdot K$ on $\g{v}\oplus\g{z}$ given by the direct sum of the action of $H$ on $\g{v}$ described above and the standard representation $\Spin{m}\to\SO{m}$ on $\g{z}=\R^m$, whereas if $M$ is a non-symmetric Damek-Ricci space, the isotropy representation is the action of $H=\Spin{m}\cdot K$ on $\g{a}\oplus\g{v}\oplus\g{z}$ given by the direct sum of the trivial action on $\g{a}$, the action of $H$ on $\g{v}$ described above, and the standard representation on $\g{z}=\R^m$.

\smallskip
\textit{Case \textup{(\ref{th:linear:H})}: Generalized Heisenberg groups.}
\smallskip

Let us assume that $M=N(m,k)$ or $M=N(m,k_+,k_-)$ is a generalized Heisenberg group with Lie algebra $\g{n}=\g{v}\oplus\g{z}$, whose linear isotropy representation is polar. Then, the linear isotropy representation is a reducible polar representation and, hence, must appear in Bergmann's paper~\cite{bergmann}; for convenience, we will refer instead to~\cite[Appendix 9]{FGT} and follow the terminology there.

Assume that the $H$-action on $\g{v}$ is irreducible, so that the $H$-action on $\g{n}$ has exactly two irreducible submodules. The identity component of the kernel of the $H$-action on $\g{v}$ is $H_{\g{z}}=\{e\}$, whereas for $\g{z}$ it is $H_{\g{v}}=K$. Then, the actions of $H$ and $H_{\g{z}}$ on $\g{z}$ are not orbit equivalent whenever $m\geq 2$, while the actions of $H$ and $H_{\g{v}}$ on $\g{v}$ are orbit equivalent if and only if $\g{d}=\mathbb{K}$, which happens precisely when $m\in\{1,2,3\}$. Thus, following the terminology of~\cite{FGT}, the $H$-action on $\g{n}$ is standard if and only if $m=1$, special if and only if $m\in\{2,3\}$, and exceptional (called interesting in~\cite{bergmann}) otherwise; note that the definition of exceptional representation in~\cite{FGT} does not have anything to do with the usual notions of exceptional orbits or exceptional Lie groups. For $m\in\{1,2,3\}$ it is clear from the description of the $H$-action on $\g{n}$ above that $M$ has a polar isotropy representation. Since for $m\geq 4$ the $H$-action on $\g{n}$ is exceptional, we just have to analyze if each one of the examples of exceptional reducible polar representations (listed in~\cite[Table~1]{bergmann} or~\cite[Table~9.2]{FGT}) appears as the isotropy representation of some generalized Heisenberg group $M$ with $m\geq 4$. By dimension reasons, the reducible action of $\SU{4}$ on $\R^6\oplus\R^8$, where $\SU{4}\cong\Spin{6}$ acts on $\R^6$ via the standard representation $\Spin{6}\to\SO{6}$, appears only as the isotropy representation of $N(6,1)$. The $\Spin{7}$-action on $\R^7\oplus\R^8$ appears only as the isotropy representation of $N(7,1,0)$ or $N(7,0,1)$, which are isometric. By dimension reasons, and taking into account the explicit description of $H$, one checks that the remaining exceptional actions do not appear as isotropy representations of generalized Heisenberg groups.

Now assume that the $H$-action on $\g{v}$ is reducible. Then, by~\cite[Lemma~9.5]{FGT}, we have that $\Spin{m}$ is either trivial, $\SO{2}$ or $\Sp{3}$, that is, $m\in\{1,2,3\}$. But, as we stated above, in these cases $M$ always has polar isotropy representation.

\smallskip
\textit{Case \textup{(\ref{th:linear:DR})}: Damek-Ricci spaces.}
\smallskip	

Now assume that $M=S(m,k)$ or $M=S(m,k_+,k_-)$ is a non-symmetric Damek-Ricci space. The condition of being non-symmetric means that
\[
M\notin\{S(1,k), S(3,k,0)\cong S(3,0,k), S(7,1,0)\cong S(7,0,1)\}.
\]
Since the isotropy representation of $M=S(m,k)$ (resp.\ $M=S(m,k_+,k_-)$) is the same as the one of the corresponding Heisenberg group $N(m,k)$ (resp.\ $N(m,k_+,k_-)$), plus a one-dimensional trivial submodule, it follows that the isotropy representation of $M$ is polar if and only if the isotropy representation of the corresponding Heisenberg group is polar, from which the result follows.
\end{proof}

Finally, we have

\begin{theorem}
The generalized Heisenberg groups and the non-symmetric Damek-Ricci spaces have non-polar isotropy actions.
\end{theorem}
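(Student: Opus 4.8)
The plan is to split the spaces according to Theorem~\ref{th:linear:H-DR}. Throughout, $H=\Spin{m}\cdot K$ denotes the isotropy group, fixing the identity $e$, and I use repeatedly that the slice representation of the isotropy action at $e$ is exactly the linear isotropy representation. If $M$ is a generalized Heisenberg group or non-symmetric Damek--Ricci space \emph{not} listed in Theorem~\ref{th:linear:H-DR}, then its linear isotropy representation is non-polar; since slice representations of polar actions are polar \cite[Thm.~4.6]{pt1}, the isotropy action of $M$ is non-polar. This leaves the finitely many families of Theorem~\ref{th:linear:H-DR}, for which the linear isotropy representation is polar.

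Among these, a direct orbit-dimension count shows that the isotropy action has cohomogeneity two exactly for $N(1,k)$, $N(2,k)$, $N(3,k,0)$, $N(6,1)$ and $N(7,1,0)$. For each of these, if the isotropy action were polar, then by Lemma~\ref{lm:CohTwo} the space $M$ would be locally symmetric. But a simply connected locally symmetric space is symmetric, while a generalized Heisenberg group is non-abelian $2$-step nilpotent, hence not flat, hence not symmetric. This contradiction settles the cohomogeneity-two cases.

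The remaining spaces --- $N(3,k_+,k_-)$ with $k_\pm\ge 1$, and the Damek--Ricci spaces $S(2,k)$, $S(3,k_+,k_-)$, $S(6,1)$ --- have cohomogeneity at least three, and one checks that every slice representation is polar, so they are infinitesimally polar; here I argue via the section. Suppose the isotropy action is polar. A section $\Sigma$ passes through the fixed point $e$, is totally geodesic \cite[Thm.~3.2]{pt}, and has $T_e\Sigma$ a section of the linear isotropy representation; after moving $\Sigma$ by an element of $H$ I may take $T_e\Sigma=\Sigma_0$ for an explicit such section. The $H$-summands $\g a,\g v,\g z$ (and the two pieces of $\g v$ when $m\equiv 3\bmod 4$) are pairwise inequivalent and irreducible, so by Dadok's theorem \cite[Thm.~4]{dadok} one has $\Sigma_0=\g a\oplus\g{w}_{\g{v}}\oplus\g{w}_{\g{z}}$ in the Damek--Ricci case (and $\Sigma_0=\g{w}_{\g{v}}\oplus\g{w}_{\g{z}}$ for $N(3,k_+,k_-)$), with $\g{w}_{\g{v}}\subseteq\g v$ and $\g{w}_{\g{z}}\subseteq\g z$. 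The orbit of a nonzero central vector lies in $\g z$, so $\g{w}_{\g{z}}=\R Z_0$ is a nonzero line; and since in each of these cases $H$ is transitive on the unit sphere of every irreducible piece of $\g v$, the subspace $\g{w}_{\g{v}}$ is a single real line, or one line in each of the two pieces. Now a necessary condition for $\Sigma_0$ to be tangent at $e$ to a totally geodesic submanifold is $J_{\g{w}_{\g{z}}}\g{w}_{\g{v}}\subseteq\g{w}_{\g{v}}$, and this fails: the identity $\langle J_Z U,V\rangle=\langle[U,V],Z\rangle$ makes each $J_Z$ skew-symmetric, so $J_{Z_0}X\perp X$ for a generating vector $X$ of $\g{w}_{\g{v}}$, and $J_{Z_0}$ is Clifford multiplication, hence preserves each irreducible piece of $\g v$, so $J_{Z_0}X$ is also orthogonal to the other generating line. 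Thus $0\neq J_{Z_0}X\perp\g{w}_{\g{v}}$, contradicting the inclusion; so no totally geodesic section exists and the isotropy action is non-polar.

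The hard part is the necessary condition $J_{\g{w}_{\g{z}}}\g{w}_{\g{v}}\subseteq\g{w}_{\g{v}}$ for the cohomogeneity $\ge 3$ cases. It is not visible at first order at $e$: one checks that $\Sigma_0$ is curvature-invariant, $R(\Sigma_0,\Sigma_0)\Sigma_0\subseteq\Sigma_0$, and that the second fundamental form of $\exp_e(\Sigma_0)$ vanishes at $e$, so the failure of total geodesy is genuinely of second order along $\exp_e(\Sigma_0)$. I would establish it either from the classification of totally geodesic submanifolds of generalized Heisenberg groups and Damek--Ricci spaces \cite{BTV}, whose conditions include exactly $J_{\g{w}_{\g{z}}}\g{w}_{\g{v}}\subseteq\g{w}_{\g{v}}$, or by a direct second-order computation using $\nabla_U Z=\nabla_Z U=-\tfrac12 J_Z U$ (already instructive in the three-dimensional Heisenberg group); once this is in place, the skew-symmetry of $J_{Z_0}$ makes the failure automatic in every remaining case.
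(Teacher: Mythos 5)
Your reduction to the spaces of Theorem~\ref{th:linear:H-DR} and your treatment of the cohomogeneity-two Heisenberg cases are sound, and the latter is a genuinely different (and legitimate) route from the paper: the paper never invokes Lemma~\ref{lm:CohTwo} here, whereas you get $N(1,k)$, $N(2,k)$, $N(3,k,0)$, $N(6,1)$, $N(7,1,0)$ for free from it, modulo one garbled justification --- ``non-abelian nilpotent, hence not flat, hence not symmetric'' has the implications in the wrong order (plenty of symmetric spaces are non-flat); what you need is that a \emph{symmetric} homogeneous nilmanifold must be flat and a flat nilpotent Lie group with left-invariant metric must be abelian, which then contradicts $[\g{v},\g{v}]\neq 0$. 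Note also that Lemma~\ref{lm:CohTwo} helps with none of the Damek--Ricci cases, since the trivial summand $\g{a}$ forces cohomogeneity $\geq 3$ there.

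The genuine gap is exactly where you flag ``the hard part'': the claimed necessary condition $J_{\g{w}_{\g{z}}}\g{w}_{\g{v}}\subseteq\g{w}_{\g{v}}$ for $\Sigma_0$ to be tangent to a totally geodesic submanifold is the whole theorem in disguise, and neither of your two proposed justifications can deliver it. First, \cite{BTV} (\S3.1.9, \S4.1.11) contains only \emph{sufficient} conditions --- constructions of totally geodesic subgroups from $J$-invariant subspaces --- not a classification of totally geodesic submanifolds, so the converse you need is not there (and was not available in the literature). Second, a ``direct second-order computation'' cannot work, for the reason you yourself give: $\Sigma_0$ is curvature-invariant and $\exp_e(\Sigma_0)$ has vanishing second fundamental form at $e$, so there is no pointwise tensorial obstruction at $e$; in a non-symmetric homogeneous space, curvature-invariance of a subspace simply does not decide total geodesy. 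The paper's way around this is the missing idea: intersect the putative section $\Sigma$ with an explicit known totally geodesic submanifold $L=\Exp(\R U\oplus\R J_ZU\oplus\R Z)$ (resp.\ $L=\Exp(\R B\oplus\R U\oplus\R J_ZU\oplus\R Z)$), which is isometric to $N(1,1)$ (resp.\ to the complex hyperbolic plane). Since $T_e\Sigma\cap T_eL=\R U\oplus\R Z$ (resp.\ $\R B\oplus\R U\oplus\R Z$), the intersection $\Sigma\cap L$ would be a $2$-dimensional (resp.\ $3$-dimensional) totally geodesic submanifold of $L$; this is killed by the \emph{global} classification result of Souam--Toubiana \cite{ST} that the three-dimensional Heisenberg group admits no totally geodesic surfaces (resp.\ by the fact that an irreducible symmetric space with a totally geodesic hypersurface has constant curvature). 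Some external input of this kind is unavoidable; without it your argument establishes the result only for the cohomogeneity-two Heisenberg cases.
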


\begin{proof}
Again, we consider these two cases separately.

\smallskip
\textit{Case \textup{(i)}: Generalized Heisenberg groups.}
\smallskip

	Let $M$ be a generalized Heisenberg group and assume, by way of contradiction, that the isotropy $H$-action on $M$ is polar with section $\Sigma$. Then $T_e\Sigma$ is a section for the isotropy representation of $M$ at $e$. Hence, $M$ is one of the Heisenberg groups given in Theorem~\ref{th:linear:H-DR}~(\ref{th:linear:H}), and the isotropy representations of these spaces have cohomogeneity two, except for $N(3,k_+,k_-)$ with $k_+$, $k_-\geq 1$, which have cohomogeneity three. Hence, $T_e\Sigma=\spann\{U,Z\}$ in the first family of spaces, or $T_e\Sigma=\spann\{U,V,Z\}$ in the second family, where $U$, $V\in\g{v}$ and $Z\in\g{z}$ are mutually orthogonal unit vectors, and $V$ is perpendicular to~$J_\g{z} U$. (This follows from the fact that the two irreducible submodules of the $H$-action on $\g{v}$ are also submodules of the $\mathrm{Cl}(\g{z},q)=\mathrm{Cl}_3$ representation on $\g{v}$, and hence, invariant under $J_Z$ for all $Z\in\g{z}$.)

	Then, $L=\Exp(\R U\oplus \R J_Z U\oplus\R Z)$, where $\Exp$ denotes the Lie exponential map, is a totally geodesic submanifold of $M$~\cite[\S3.1.9]{BTV} isometric to the Heisenberg group $N(1,1)$. Observe that $T_e\Sigma\cap T_e L=\R U\oplus\R Z$. Since the intersection of totally geodesic submanifolds is totally geodesic, we have that $\Sigma\cap L$ is totally geodesic in $M$, and hence also in $L$. Since $\Sigma\cap L=\exp_e(\R U\oplus\R Z)$, where now $\exp$ denotes the Riemannian exponential map, we have that $\Sigma\cap L$ has dimension $2$. But the Heisenberg group $L\cong N(1,1)$ does not admit totally geodesic surfaces~\cite{ST}, which yields a contradiction.
	
\smallskip
\textit{Case \textup{(ii)}: Damek-Ricci spaces.}
\smallskip	

	Now let $M$ be a Damek-Ricci space. Arguing similarly as for the Heisenberg groups, we will prove that the isotropy action of the spaces given in Theorem~\ref{th:linear:H-DR}~(\ref{th:linear:DR}) is non-polar. Assume, by contradiction, that the isotropy $H$-action on $M$ is polar with section $\Sigma$. Then $T_e\Sigma$ is a section for the isotropy representation of $M$ at $e$. Thus, $T_e\Sigma=\spann\{B,U,Z\}$ for $M\in\{S(2,k), S(5,1),S(6,1)\}$, or $T_e\Sigma=\spann\{B,U,V,Z\}$ if $M=S(3,k_+,k_-)$ with $k_+$, $k_-\geq 1$, where $B\in\g{a}$, $U$, $V\in\g{v}$, $Z\in\g{z}$ are mutually orthogonal unit vectors, and $V$ is perpendicular to~$J_\g{z} U$.
	
	Then $L=\Exp(\R B\oplus\R U\oplus \R J_Z U\oplus\R Z)$ is a totally geodesic submanifold of $M$~\cite[\S4.1.11]{BTV} isometric to a complex hyperbolic plane $\C \mathsf{H}^2$. Hence $\Sigma\cap L$ is a totally geodesic submanifold of $L$, which has dimension $3$ since $T_e\Sigma\cap T_eL=\R B\oplus\R U\oplus \R Z$. But this is impossible, since the only irreducible symmetric spaces admitting a totally geodesic hypersurface are those of constant curvature.
\end{proof}

%
%

\section{Open questions}\label{sect:open}

It is natural to ask whether Theorem~\ref{th:main} can be generalized to the non-compact case. In order to prove an analogous result in the non-compact case, one needs to decide if the spaces given in Corollaries~\ref{cor:ncslin} and~\ref{cor:nclinpoliso} can have polar isotropy actions.
For example, it remains to decide whether the space $\Sp{n,1}/\Sp{n}$ can be endowed with a Riemannian metric such that its isotropy action is polar.

More generally, one may study whether Riemannian homogeneous spaces~$G/H$ with non-trivial polar isotropy actions are symmetric without requiring~$G$ to be semisimple, i.e.\ one may seek to extend the results of~\S\ref{sect:Heis-DR} to other classes of homogeneous spaces.
We are not aware of any irreducible non-symmetric Riemannian homogeneous space with a non-trivial polar isotropy action.

Finally, it is an intriguing question whether one can find a conceptual or geometric proof linking polar isotropy with symmetry.


\bigskip
\end{document}